\newtheorem{thm}{Theorem}[section]
\newtheorem{prop}[thm]{Proposition}
\newtheorem{lem}[thm]{Lemma}
\newtheorem{cor}[thm]{Corollary}
\renewcommand{\theclaim}{\kern-3pt}
\theoremstyle{definition}
\newtheorem{definition}[thm]{Definition}
\theoremstyle{remark}
\newtheorem{rem}[thm]{Remark}
\newtheorem{ex}[thm]{Example}
\numberwithin{equation}{section}
\newcommand{\sC}{{\mathcal C}}
\newcommand{\sE}{{\mathcal E}}
\newcommand{\sF}{{\mathcal F}}
\newcommand{\sG}{{\mathcal G}}
\newcommand{\sH}{{\mathcal H}}
\newcommand{\sO}{{\mathcal O}}
\newcommand{\sS}{{\mathcal S}}
\newcommand{\sT}{{\mathcal T}}
\newcommand{\sV}{{\mathcal V}}
\newcommand{\sX}{{\mathcal X}}
\newcommand{\A}{{\mathbb A}}
\newcommand{\G}{{\mathbb G}}
\renewcommand{\P}{{\mathbb P}}
\newcommand{\Q}{{\mathbb Q}}
\newcommand{\mS}{{\mathbb S}}
\newcommand{\Z}{{\mathbb Z}}
\renewcommand{\L}{{\mathbb L}}
\renewcommand{\phi}{\varphi}
\newcommand{\Div}{{\rm Div}}
\newcommand{\Hom}{{\rm Hom}}
\newcommand{\Spec}{{\rm Spec \,}}
\newcommand{\0}{\emptyset}
\newcommand{\id}{{\operatorname{id}}}
\newcommand{\Sch}{{\operatorname{\mathbf{Sch}}}}
\newcommand{\del}{\partial}
\renewcommand{\max}{{\operatorname{\rm max}}}
\newcommand{\Spt}{{\mathbf{Spt}}}
\newcommand{\Spc}{{\mathbf{Spc}}}
\newcommand{\Sm}{{\mathbf{Sm}}}
\newcommand{\hocolim}{\mathop{{\rm hocolim}}}
\newcommand{\colim}{\operatornamewithlimits{\varinjlim}}
\newcommand{\GL}{{\operatorname{\rm GL}}}
\newcommand{\fin}{{\operatorname{\rm fin}}}
\newcommand{\SH}{{\operatorname{\sS\sH}}}
\newcommand{\eff}{{\mathop{eff}}}
\newcommand{\DM}{{DM}}
\newcommand{\Nis}{{\operatorname{Nis}}}
\newcommand{\ds}{{/\kern-3pt/}}
\newcommand{\Th}{{\mathop{\rm{Th}}}}
\newcommand{\Gr}{{\mathop{\rm{Grass}}}}
\newcommand{\Deg}{{\mathop{\rm{deg}}}}
\newcommand{\EM}{{\mathop{EM}}}
\newcommand{\SP}{{\mathbf{SP}}}
\newcommand{\gr}{\mathbf{Gr}}
\newcommand{\MGL}{{\operatorname{MGL}}}
\newcommand{\lci}{\text{l.\,c.\,i.\,}}
\newcommand{\GrAb}{\mathbf{GrAb}}
\newcommand{\ch}{\rm{ch}}
\newcommand{\ph}{\text{ph}}
\begin{document}
\title{Motivic Landweber exact theories and their effective covers}
\author{Marc Levine}
\address{
Universit\"at Duisburg-Essen\\
Fakult\"at Mathematik\\
Thea-Leymann-Stra{\ss}e 9\\
45127 Essen\\
Germany}
\email{marc.levine@uni-due.de} 

\keywords{Algebraic cobordism, oriented theories, slice tower}

\subjclass{Primary 14C25, 19E15; Secondary 19E08 14F42, 55P42}
 
 \thanks{The 
author  thanks the Humboldt Foundation for support through the Humboldt Professorship.}

\begin{abstract} Let $k$ be a field of characteristic zero and let $(F,R)$ be a Landweber exact formal group law. We consider a Landweber exact $T$-spectra $\sE:=R\otimes_\L\MGL$ and its effective cover $f_0\sE\to \sE$ with respect to Voevodsky's slice tower. The coefficient ring $R_0$ of $f_0\sE$ is the subring of $R$ consisting of elements of $R$ of non-positive degree; the power series $F\in R[[u,v]]$ has coefficients in $R_0$ although $(F,R_0)$ is not necessarily Landweber exact. We show that the geometric part  $X\mapsto f_0\sE^*(X):=(f_0\sE)^{2*,*}(X)$  of $f_0\sE$  is canonically isomorphic to the oriented cohomology theory $X\mapsto R_0 \otimes_\L \Omega^*(X)$, where   $\Omega^*$ is the theory of {\em algebraic cobordism} as defined in \cite{LevineMorel}. This recovers results of Dai-Levine \cite{DaiLevine} as the special case of algebraic $K$-theory and its effective cover, connective algebraic $K$-theory.
\end{abstract}

\maketitle

\setcounter{tocdepth}{1}
\tableofcontents

\section*{Introduction} Let $S$ be a fixed base-scheme, $\Sm/S$ the category of smooth quasi-projective $S$-schemes, and $\SH(S)$ the motivic stable homotopy category of $T$-spectra. In this paper we consider two types of  cohomology theories which carry the designation ``oriented''. The first type are those bi-graded theories on $\Sm/S$, $X\mapsto \sE^{*,*}(X)$  represented by a (weak) commutative ring $T$-spectrum $\sE\in \SH(S)$ with an orientation $c$ in the reduced cohomology $\tilde{\sE}^{2,1}(\P^\infty)$. The second are the oriented theories in the sense of \cite[definition 1.1.1]{LevineMorel}, that is, contravariant functors $X\mapsto  A^*(X)$ from $\Sm/S$ to commutative graded rings, together with pushforward maps $f_*$ for projective morphisms $f:Y\to X$, satisfying a number of functorialities and compatibilities. We will refer to the first type as  ``motivic'' theories, the second as ``geometric''. Assigning to a motivic theory $\sE^{*,*}$ its {\em geometric part} $X\mapsto \sE^{2*,*}(X)$ gives the link between these two notions.

Among the oriented motivic theories, the theory represented by Voevodsky's algebraic cobordism spectrum $\MGL$ is the universal one (see \cite{Voevodsky} for the construction of $\MGL$,  \cite{PaninPimenovRoendigs}  for its universality). For $S=\Spec k$, $k$ a field of characteristic zero, algebraic cobordism $\Omega^*$ as defined in \cite{LevineMorel}  is the universal geometric theory.  

For $S=\Spec k$ as above,   the universal property of $\Omega^*$ gives a canonical natural transformation $\Omega^*\to \MGL^{2*,*}$ of geometric theories. Relying on results of Hopkins-Morel-Hoyois \cite{HopkinsMorel, Hoyois},  this  was shown to be an isomorphism in \cite{LevineComparison}. The main purpose of this paper is to  extend this isomorphism property to other motivic theories and their geometric parts.

For a formal group law $(F, R)$, $F(u,v)\in R[[u,v]]$, one can form the oriented  geometric theory $X\mapsto R\otimes_\L\Omega^*(X)$, where $\L$ is the Lazard ring and $\L\to  R$ the classifying homomorphism for $F$. In the motivic setting, the situation is more delicate, however, just as in the classical case, if $(F,R)$ is a {\em Landweber exact} formal group law, there is a corresponding oriented weak commutative ring spectrum $\MGL(R)$ with $\MGL(R)^{*,*}(X)\cong R\otimes_\L\MGL^{*,*}(X)$ (see \cite{NSO}).

One can go a bit farther by considering the {\em effective cover} $f_0\sE\to \sE$ of a $T$-spectrum $\sE\in\SH(S)$. Here $f_0$ is the truncation functor with respect to Voevodsky's slice tower. $f_0\sE$ is an analog of the classical -1 connective cover of a spectrum, and inherits many properties from $\sE$. In particular, for $\sE$ an oriented weak commutative ring $T$-spectrum, the effective cover $f_0\sE$ inherits from $\sE$ a canonical structure of an oriented weak commutative ring $T$-spectrum; the coefficient ring $R_0:=(f_0\sE)^{2*,*}(S)$ is just the degree $\le0$ part of the coefficient ring $R:=\sE^{*,*}(S)$ of $\sE$ (at least for $S$ the spectrum of a characteristic zero field, see theorem~\ref{thm:LexactGLExact}). The coefficients of the group law $F_\sE$ associated to $\sE$ actually lie in $R_0$, but even if $(F_\sE, R)$ is Landweber exact, it is usually the case that $(F_\sE, R_0)$ is not.  

For  $\sE$ the oriented spectrum $\MGL(R)$ associated to a Landweber exact formal group law, with  effective cover $\sE_0$, our main result  is the following: For $k$ a characteristic zero field, $S=\Spec k$,  the canonical natural transformations
\[
\sE^{2*,*}(k)\otimes_{\L^*}\Omega^*\to \sE^{2*,*},\
\sE_0^{2*,*}(k)\otimes_{\L^*}\Omega^* \to (f_0\sE)^{2*,*}
\]
are isomorphisms of geometric oriented cohomology theories on $\Sm/k$. We actually prove a stronger result  (see corollary~\ref{cor:main}) concerning the oriented Borel-Moore homology theories on $\Sch/k$ defined by $\sE$ and $f_0\sE$. 

The case of the Landweber exact theory $\sE^{*,*}$ follows immediately  from the case $\sE=\MGL$, proved in \cite{LevineComparison}, so our efforts are directed at the effective cover $\sE_0$. The main idea for these results  already appears in our treatment (with S. Dai) of the case of algebraic $K$-theory and its effective cover \cite{DaiLevine}. We axiomatize the situation via the notion of a {\em geometrically Landweber exact} motivic oriented theory (see definition~\ref{def:GeomLandEx}) and show that for such a theory $\sE$, the canonical map 
\[
\sE^{2*,*}(k)\otimes_{\L^*}\Omega^*(X)\to \sE^{2*,*}(X)
\]
is an isomorphism for all $X\in \Sm/k$ (theorem~\ref{thm:GeomLandExact}). Our result on a Landweber exact theory and its effective cover then follow once we show that these are both geometrically Landweber exact (theorem~\ref{thm:LexactGLExact}).

The paper is organized as follows: We begin by recalling some of the basic notions concerning oriented (weak) commutative ring spectra in the motivic stable homotopy category in \S\ref{sec:Orient}, where we also recall the main results on the universality of $\MGL$. In \S\ref{sec:slice} we recall basic facts about the slice tower in the motivic stable homotopy category; we discuss as well some issues of convergence of the slice spectral sequence. In \S\ref{sec:Cover} we introduce the effective cover of an oriented weak ring $T$-spectrum and show that it too defines an oriented weak commutative ring spectrum. We discuss Landweber exact theories and introduce the notion of a geometrically Landweber exact theory.   We discuss oriented duality theories on the category of smooth pairs over $k$ in \S\ref{sec:OrientDual}. This theory provides the link between geometric theories  and motivic theories via the operation of taking the geometric part of a motivic oriented theory.  We describe the relation of  oriented duality theories  with algebraic cobordism in \S\ref{sec:AlgCobordOrient} and put everything together and prove our main results in the final section, \S\ref{sec:Comp}.

I would like to thank the referee for a number of helpful comments and suggestions. 
\\
\\
\noindent {\em Notation and conventions}. We let $\Spc$ and $\Spc_\bullet$ denote the categories of simplicial sets and pointed simplicial sets, respectively, with homotopy categories $\sH$ and $\sH_\bullet$. $\Spt$ the category of spectra (for the usual suspension operator $\Sigma:=(-)\wedge S^1$) and $\SH$ the stable homotopy category.

We denote by $\G_m$ the pointed $S$-scheme $(\A^1_S-0_S,1_S)$. $\P^n_*$ will denote  the pointed $S$-scheme $\P^n_S$, with base-point $[1,0,\ldots,0]$; we similarly  give  the limit $\P^\infty$ the base-point   $[1,0,\ldots]$.

We use a base-scheme $S$ that is separated, regular, noetherian and of finite Krull dimension; we will at a certain point specialize to the case $S=\Spec k$, $k$ a field of characteristic zero.
$\Sch/S$ will denote the category of quasi-projective schemes over $S$ and $\Sm/S$ the full subcategory of smooth, quasi-projective schemes over $S$.  $\Spc(S)$ and $\Spc_\bullet(S)$ are respectively the categories of presheaves on $\Sm/S$ with values in $\Spc$ and $\Spc_\bullet$.  $\Spt_{S^1}(S)$ will denote the category of $S^1$-spectra over $S$, this being the category of presheaves of spectra on $\Sm/S$. We let $\Spt_T(S)$ denote the category of $T$-spectra in $\Spc_\bullet(S)$, with $T:=\A^1/\A^1\setminus\{0\}$.  

The categories $\Spc(S)$,  $\Spc_\bullet(S)$, $\Spt_{S^1}(S)$ and $\Spt_T(S)$  all have {\em motivic} model structures (the original source for the unstable theory is  \cite{MorelVoev}, see also \cite{Motivic, GoerssJardine} for a compact description. For the stable theory, we refer the reader to \cite{Jardine2}), with homotopy categories denoted $\sH(S)$, $\sH_\bullet(S)$, $\SH_{S^1}(S)$ and $\SH(S)$, respectively. The categories  $\SH_{S^1}(S)$ and $\SH(S)$ are   triangulated tensor categories with translation given by $S^1$-suspension; the tensor structure is constructed using symmetric spectrum versions of $\Spt_{S^1}(S)$ and $\Spt_T(S)$, with appropriate model structures, also discussed in \cite{Jardine2}. The unit for the tensor structure in $\SH(S)$ is the motivic sphere spectrum $\mS_S:=\Sigma^\infty_T S_+$.

We have suspension functors $\Sigma^{a,b}:\SH(S)\to \SH(S)$ defined for $a\ge b\ge0$ by $\Sigma^{a,b}(\sE)=\sE\wedge S^{a-b}\wedge\G_m^{\wedge b}$, and extending to $a,b\in\Z$ using the invertibility of $\Sigma^{1,0}$ and $\Sigma^{1,1}$. We have as well the $T$-suspension $\Sigma_T(\sE):=\sE\wedge T$, canonically isomorphic to $\Sigma^{2,1}$. In addition, we have infinite suspension functors $\Sigma_T^\infty:\sH_\bullet(S)\to \SH(S)$, $\Sigma_s^\infty:\sH_\bullet(S)\to \SH_{S^1}(S)$ and $\Sigma_{\G_m}^\infty:\SH_{S^1}(S)\to \SH(S)$ with respective right adjoints $\Omega_T^\infty:\SH(S)\to \sH_\bullet(S)$, $\Omega_{S^1}^\infty:\SH_{S^1}(S)\to \sH_\bullet(S)$ and $\Omega_{\G_m}^\infty:\SH(S)\to \SH_{S^1}(S)$.

We let $\L$ denote the Lazard ring, that is, the coefficient ring of the universal rank one commutative formal group law $F_\L\in \L[[u,v]]$. We let $\L^*$ denote $\L$ with the grading determined by $\deg a_{ij}=1-i-j$ if $F_\L(u,v)=u+v+\sum_{i,j\ge1}a_{ij}u^iv^j$ and let $\L_*$ denote $\L$ with the opposite grading $\L_n:=\L^{-n}$.

\section{Oriented ring $T$-spectra}\label{sec:Orient} 
We recall that a morphism $f:\sE\to \sF$ in a compactly generated triangulated category $\sT$ is a {\em phantom map} if for each compact object $A$ in $\sT$, the induced map $f_*:\Hom_\sT(A,\sE)\to \Hom_\sT(A, \sF)$ is zero; it is enough to check on compact objects of the form $\sX[n]$, where $\sX$ is an element in a given set of compact generators and $n\in\Z$.   The subset of phantom maps $\Hom_\sT(\sE, \sF)_{\ph}\subset \Hom_\sT(\sE, \sF)$ is clearly a two-sided ideal, so we may form the category $\sT/\ph$ with the same objects as $\sT$ and morphisms $\Hom_{\sT/\ph}(\sE, \sF):=\Hom_\sT(\sE, \sF)/\Hom_\sT(\sE, \sF)_{\ph}$. 

We will be mainly interested in the case $\sT=\SH(S)$, in which the shifted suspension spectra $\Sigma_T^n\Sigma_T^\infty X_+$, $X\in \Sm/S$, $n\in\Z$, form a set of compact generators. Thus, two maps $f, g:\sE\to \sF$ are equal modulo phantom maps if and only if $f_*=g_*$ as maps of the associated bi-graded cohomology theories $f_*, g_*:\sE^{*,*}\to \sF^{*,*}$ on $\Sm/S$. 

\begin{definition} A {\em commutative ring $T$-spectrum} is a $T$-spectrum $\sE\in \Spt_T(S)$ together with maps $\mu:\sE\wedge \sE\to \sE$, $1:\mS_S\to \sE$ such that $(\sE, \mu, 1)$ is a commutative monoid in $\SH(S)$. A {\em weak commutative ring $T$-spectrum} is a $T$-spectrum $\sE\in \Spt_T(S)$ together with maps $\mu:\sE\wedge \sE\to \sE$, $1:\mS_S\to \sE$ such that $(\sE, \mu, 1)$ is a commutative monoid in $\SH(S)/\ph$. 

For $\sE, \sF$ (weak) commutative ring $T$-spectra, a morphism $f:\sE\to \sF$ in $\SH(S)$ is a monoid map (resp. weak monoid map) if $f$ is a map of monoid objects in $\SH(S)$ (resp. in $\SH(S)/\ph$).

\begin{definition}\label{Def:Orientation}
An {\em orientation} on a weak commutative ring $T$-spectrum $(\sE, \mu, 1)$ is an element $c\in \sE^{2,1}(\P^\infty_*)$ such that the restriction  $c_{|\P^1}\in \sE^{2,1}(\P^1_*)$ corresponds to $1\in \sE^{0,0}(S)$ under the suspension isomorphism $\sE^{0,0}(S)\cong \sE^{2,1}(T)\cong  \sE^{2,1}(\P^1_*)$.
\end{definition}

A pair $(\sE, c)$ consisting of a weak  commutative ring $T$-spectrum $(\sE, \mu, 1)$ and an orientation $c$ is an {\em   oriented weak commutative ring $T$-spectrum}. We say that a  weak  commutative ring $T$-spectrum $(\sE, \mu, 1)$ is {\em orientable} if there is an orientation $c$ on $\sE$. We sometimes omit the explicit mention of the orientation $c$.

A morphism $f:(\sE, c)\to (\sE', c')$ of oriented weak commutative ring $T$-spectra is a morphism $f:\sE\to \sE'$ in $\SH(S)$ such that $f$ is a weak monoid map and $f_*(c)=c'$.
\end{definition}

\begin{ex}\label{ex:OrientUniv} The algebraic cobordism spectrum $\MGL$ has been studied in \cite{PaninPimenovRoendigs}. $\MGL$ is the $T$-spectrum $(\MGL_0, \MGL_1,\ldots)$ with $\MGL_n$ the Thom space $Th(E_n)$, with $E_n\to B\GL_n$ the universal $n$-plane bundle. $\MGL_S$ is a commutative ring $T$-spectrum in $\SH(S)$ with an orientation $c_\MGL\in\MGL^{2,1}(\P_*^\infty)$ given by noting that the diagram
\[
\xymatrix{\P^\infty&E_1\ar[l]\ar[r]&Th(E_1)=\MGL_1}
\]
induces an isomorphism  $\P^\infty_*\cong \MGL_1$ in $\sH_\bullet(S)$ and thereby a morphism $c_\MGL:\Sigma^\infty_T\P^\infty_*\to \Sigma^{2,1}\MGL$ in $\SH(S)$.

We recall the following result of Panin-Pimenov-R\"ondigs:
\begin{thm}[\hbox{\cite[theorem 1.1]{PaninPimenovRoendigs}}] \label{thm:PPRUniversal} For $\sE$ a  commutative  ring $T$-spectrum in $\SH(S)$, sending a monoid morphism $\phi:\MGL\to \sE$ to $\phi(c_\MGL)$ gives a bijection of the set of monoid maps $\phi$ with the set of orientations $c_\sE\in \sE^{2,1}(\P_*^\infty)$. 
\end{thm}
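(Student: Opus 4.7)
The plan is to mimic, in the motivic stable homotopy category, the classical proof that complex orientations of a ring spectrum correspond bijectively to ring maps out of $MU$. The construction in one direction is given in the statement: a monoid map $\phi:\MGL\to\sE$ sends the universal orientation $c_\MGL$ to an orientation of $\sE$. The real content is building the inverse and checking it is well defined.

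First, I would show that an orientation $c\in\sE^{2,1}(\P^\infty_*)$ forces a projective bundle formula for $\sE$. Concretely, given a rank $n+1$ vector bundle $V\to X$ with associated projective bundle $q:\P(V)\to X$ and tautological quotient line bundle $\sO_{\P(V)}(1)$, the class $\xi:=f_V^*(c)\in\sE^{2,1}(\P(V))$, where $f_V:\P(V)\to\P^\infty$ classifies $\sO_{\P(V)}(1)$, should satisfy
\[
\sE^{*,*}(\P(V))\cong \bigoplus_{i=0}^n \xi^i\cdot q^*\sE^{*,*}(X).
\]
This is proved by the standard reduction to $V$ trivial (via a Mayer--Vietoris over an open cover trivializing $V$), and the trivial case in turn follows by a limit argument from the statement for $\P^\infty$, which is immediate from the definition of orientation once one establishes $\sE^{*,*}(\P^\infty_*)\cong\sE^{*,*}(S)[[c]]$ in an appropriate sense; the key fact is that $c_{|\P^1}$ is the suspension generator, so the Atiyah--Hirzebruch style spectral sequence collapses.

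Second, I would extract Chern classes $c_i(V)\in\sE^{2i,i}(X)$ for every vector bundle $V$ from the projective bundle formula in the usual way, and then for a rank $r$ vector bundle $V\to X$ construct a Thom class $th(V)\in \tilde\sE^{2r,r}(\mathrm{Th}(V))$ together with a Thom isomorphism $\tilde\sE^{*,*}(\mathrm{Th}(V))\cong \sE^{*-2r,*-r}(X)$. Applied to the universal bundles $E_n\to B\GL_n$ this produces classes $th_n\in\tilde\sE^{2n,n}(\MGL_n)$, i.e.\ maps of pointed motivic spaces $\MGL_n\to \Sigma^{2n,n}\sE$. The Whitney sum formula for Chern classes of $E_n\oplus\sO$ translates to the compatibility of $th_n$ with the bonding maps $T\wedge\MGL_n\to\MGL_{n+1}$, so these assemble into a morphism $\phi_c:\MGL\to\sE$ in $\SH(S)$.

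Third, I would check $\phi_c$ is a (weak) monoid map. This uses the fact that the multiplication on $\MGL$ is induced, at the level of Thom spaces, by the external direct sum pairing $E_m\boxplus E_n\to E_{m+n}$ classifying $\oplus:B\GL_m\times B\GL_n\to B\GL_{m+n}$; the Whitney sum formula $th(V\oplus W)=th(V)\cup th(W)$, itself a consequence of the splitting principle and the projective bundle formula, gives the needed commutative diagram modulo phantom maps (one only needs equality on compact generators $\Sigma_T^\infty X_+$, which is precisely what the Whitney formula provides). The unit axiom is immediate from $th_0=1$. The normalization $\phi_c(c_\MGL)=c$ follows from the tautological identification $\MGL_1\simeq\P^\infty_*$ used in defining $c_\MGL$.

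Finally, the two assignments $\phi\mapsto\phi(c_\MGL)$ and $c\mapsto\phi_c$ must be mutually inverse. One direction is the normalization just noted. In the other direction, if $\phi:\MGL\to\sE$ is a monoid map then $\phi$ pulls back the universal Thom classes in $\MGL$ to the Thom classes in $\sE$ associated with the orientation $\phi(c_\MGL)$ — both sides agree on $\MGL_1\simeq\P^\infty_*$ by construction, and the splitting principle together with the multiplicativity of $\phi$ propagates this equality to all $\MGL_n$. I expect the main obstacle to be the first step, namely squeezing a projective bundle formula for $\P^\infty_*$ out of a single class $c$ in the absence of finiteness hypotheses on $\sE$: this is where one has to work modulo phantom maps and exploit that $\{\Sigma_T^n\Sigma_T^\infty X_+\}$ generate, which is precisely the reason the bijection is stated for weak monoid maps.
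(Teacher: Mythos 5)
This statement is a citation, not something the paper proves: it is recalled verbatim from Panin--Pimenov--R\"ondigs \cite[theorem 1.1]{PaninPimenovRoendigs}, and the paper's own contribution (in the surrounding text of Example 1.3) is only the observation that the PPR argument carries over to weak commutative ring $T$-spectra. Your outline is indeed the PPR strategy --- projective bundle formula from the single class $c$, Chern and Thom classes, assembling Thom classes of the universal bundles into a map $\MGL\to\sE$, then checking multiplicativity and the inverse --- but there are two points worth flagging.

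First, the genuinely delicate step is not the projective bundle formula itself, which is routine over a quasi-compact base once one has it for finite-rank trivial bundles, but the passage from Thom classes on finite Grassmannians to an honest class in $\tilde\sE^{2n,n}(\MGL_n)$. Your Step 2 says the orientation ``produces classes $th_n\in\tilde\sE^{2n,n}(\MGL_n)$,'' but a priori one only gets a compatible family of classes in $\tilde\sE^{2n,n}(\Th(\sT(n,m)))$ as $m$ varies, and one needs the map $\sE^{*,*}(\MGL_n)\to\varprojlim_m\sE^{*,*}(\Th(\sT(n,m)))$ to be an isomorphism. This is exactly where PPR prove a $\lim^1$ vanishing: the restriction maps in the tower are surjective (a Mittag--Leffler condition), which in turn follows from the projective bundle theorem over finite Grassmannians. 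Your proposal never names this lemma, and without it the Thom classes and hence the map $\phi_c$ are only defined up to phantom ambiguity.

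Second, and related, your closing sentence has the logic backwards: you assert that one ``has to work modulo phantom maps ... which is precisely the reason the bijection is stated for weak monoid maps.'' But the theorem as quoted (and as PPR prove it) is for \emph{honest} commutative ring $T$-spectra and \emph{honest} monoid maps; the $\lim^1$ argument is precisely what makes that possible, so no phantom quotient is needed. The paper's remark about weak monoid maps is a \emph{further} extension, and the point made there is the opposite of what you suggest: because the PPR proof only ever evaluates $\sE$-cohomology on objects of $\Sm/S$ (via the surjectivity/Mittag--Leffler argument), it is insensitive to phantoms and therefore also works in the weak setting. You should separate the two claims and make clear that the $\lim^1$ vanishing, not passage to the phantom quotient, is the engine of the construction.
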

Given an oriented commutative ring $T$-spectrum $(\sE,c)$ in $\SH(S)$, we let
\[
\phi_{\sE,c}:\MGL\to \sE
\]
denote the corresponding morphism of commutative ring $T$-spectra.  

In fact, this result extends directly to the setting of oriented weak commutative ring spectra, replacing ``monoid map'' with ``weak monoid map''. Indeed, the proof in \cite{PaninPimenovRoendigs} reduces to proving certain identities in $
\sE^{*,*}(\MGL(n))$ or $\sE^{*,*}(\MGL(n)\wedge \MGL(n))$. It is shown in  \cite{PaninPimenovRoendigs} that the canonical map
\[
\sE^{*,*}(\MGL(n))\to\lim_\leftarrow\sE^{*,*}(\text{Th}(\sT(n,m)))
\]
is an isomorphism, where $\sT(n,m)\to \Gr(n, n+m)$ is the universal bundle, and $\text{Th}(-)$ is the Thom space.  Similarly 
\[
\sE^{*,*}(\MGL(n)\wedge\MGL(n))\cong\lim_\leftarrow\sE^{*,*}(\text{Th}(\sT(n,m))\wedge\text{Th}(\sT(n,m)))
\]
Their proof relies on showing that the maps on $\sE$-cohomology in the inverse system are surjective, hence the proofs only require the knowledge of $\sE$-cohomology on objects in $\Sm/S$, and therefore the proofs work for oriented weak commutative ring spectra without change.

In case $S=\Spec k$, $k$ a field, Vezzosi's proof of the universality of $\MGL$   \cite[theorem 4.3]{Vezzosi} is also  based on obtaining  identities in $\sE^{**}(\MGL(n))$ or $\sE^{**}(\MGL(n)\wedge\MGL(n))$ and thus can  also be adapted to the setting of oriented weak commutative ring $T$-spectra.
\end{ex}

\begin{rem}\label{rem:Unit} Let $\sE\in\SH(S)$ be a weak commutative ring $T$-spectrum. Let $t_\sE\in \sE^{1,1}(\G_m)$ be the element corresponding to the unit $1\in \sE^{0,0}(S)$ under the suspension isomorphism. By functoriality, $t_\sE$ gives a map of pointed sets
\[
t_\sE(X):\sO_X^\times(X)\to \sE^{1,1}(X).
\]
If $\sE$ admits an orientation $c_\sE\in \sE^{2,1}(\P^\infty_*)$, then $t_\sE(X)$ is a group homomorphism.\footnote{Letting $\mS$ denote the sphere spectrum and writing $[a]:=t_\mS(a)$, this follows from the identity $[ab]=[a]+[b]+H[a][b]$ ($H:\mS\wedge\G_m\to \mS$ the stable Hopf map) and the fact that $H$ goes to zero in any oriented theory $\sE$. Both these facts are proven by Morel in \cite[\S 6]{MorelLec}.} Using the $\sE^{*,*}(S)$-module structure on $\sE^{*,*}(X)$, $t_\sE(X)$
extends to a map of $\sE^{*,*}(S)$-modules
\[
t_\sE(X):\sE^{2*,*}(S)\otimes_\Z\sO_X^\times(X)\to \sE^{2*+1,*+1}(X).
\]
\end{rem}

\section{The slice spectral sequence}\label{sec:slice}
Voevodsky introduced in \cite{VoevOpen} his {\em slice tower} as a motivic analog to the classical Postnikov tower for spectra. In addition to \cite{VoevOpen}, we refer the reader to \cite{VoevSlice1bis}  and \cite{LevineHptyComp, LevineHC} for the basic facts  concerning the slice tower, some of which we briefly recall here for the reader's convenience.

We consider the  localizing subcategory $\Sigma^p_T\SH^\eff(S)$ of $\SH(S)$ generated by objects $\Sigma^n_TX_+$, with $X\in \Sm/S$ and $n\ge p$, with inclusion $i_p:\Sigma^p_T\SH^\eff(S)\to \SH(S)$. The objects $\Sigma^n_TX_+$ are compact, hence each of the categories  $\Sigma^p_T\SH^\eff(S)$ is compactly generated;   the set of all  $\Sigma^n_TX_+$, $n\in \Z$, $X\in \Sm/S$ similarly forms a set of compact generators for $\SH(S)$. 

Thanks to results of Neeman \cite{NeemanTriang} on compactly generated triangulated categories, the functor $i_p$ admits a right adjoint  $r_p:\SH(S)\to \Sigma^p_T\SH^\eff(S)$. Defining $f_p:=i_p\circ r_p:\SH(S)\to \SH(S)$, one has canonical natural transformations $f_p\to \id$, $f_{p+1}\to f_p$. Applying these to a $T$-spectrum $\sE\in\SH(S)$ yields the  the slice tower
\[
\ldots\to f_{p+1}\sE\to f_p\sE\to\ldots\to \sE.
\]

The {\em slice functor} $s_p:\SH(S)\to \SH(S)$ is  characterized up to unique isomorphism by natural transformations $f_p\to s_p\to f_{p+1}[1]$ so that for each $\sE\in \SH(S)$, the triangle
\[
f_{p+1}\sE\to f_p\sE\to s_p\sE\to f_{p+1}\sE[1]
\]
is distinguished.

There are canonical isomorphisms for all $a, b, p$, 
\[
\Sigma^{a,b}\circ f_p\cong f_{p+b}\circ \Sigma^{a,b};\quad \Sigma^{a,b}\circ s_p\cong s_{p+b}\circ \Sigma^{a,b},
\]
compatible with the defining distinguished triangles for the $s_p$.  The canonical map $f_n\sE\to \sE$ induces an isomorphism $\Omega^\infty_{\G_m}f_n\sE\cong \Omega^\infty_{\G_m}\sE$ for all $n\le 0$.

For $Y\in \Sm/S$, we have the associated slice spectral sequence
\begin{equation}\label{eqn:SliceTower0}
E_2^{p,q}(Y;n)=(s_{-q}\sE)^{p+q, n}(Y)\Longrightarrow \sE^{p+q,n}(Y).
\end{equation}

We conclude this section with a convergence criterion for the spectral sequence  \eqref{eqn:SliceTower0}.

\begin{lem} \label{lem:SliceConv} Suppose that $S=\Spec k$, $k$ a perfect field. Take $\sE\in \SH(S)$. Suppose that there is a non-decreasing function $f:\Z\to \Z$ with $\lim_{n\to \infty}f(n)=\infty$, such that $\pi_{a+b,b}\sE=0$ for $a\le f(b)$. Then the for all $Y$, the spectral sequence \eqref{eqn:SliceTower0} is strongly convergent.\footnote{As spectral sequence $\{E_r^{pq}\}\Rightarrow G^{p+q}$ {\em converges strongly} to $G^*$ if for each $n$, the spectral sequence filtration $F^*G^n$ on $G^n$ is finite and exhaustive, there is an $r(n)$ such that  for all $p$ and all $r\ge r(n)$, all differentials entering and leaving $E_r^{p, n-p}$  are zero and the resulting maps $E_r^{p, n-p}\to E_\infty^{p, n-p}=\gr^p_FG^n$ are all isomorphisms.}  
\end{lem}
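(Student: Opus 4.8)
The plan is to reduce strong convergence to two statements about the tower $\cdots\to f_{q+1}\sE\to f_q\sE\to\cdots\to\sE$, obtained by applying $[\Sigma^\infty_T Y_+,\Sigma^{\bullet,n}(-)]$. Fix $Y\in\Sm/k$ and set $d=\dim Y<\infty$. From the identity $\Sigma^{a,b}\circ f_p\cong f_{p+b}\circ\Sigma^{a,b}$ one has $f_q\cong\Sigma^{2q,q}\circ f_0\circ\Sigma^{-2q,-q}$ and $s_q\cong\Sigma^{2q,q}\circ s_0\circ\Sigma^{-2q,-q}$; write $\sE_{(q)}:=\Sigma^{-2q,-q}\sE$. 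The two statements are: \emph{(A)} for each bidegree $(s,t)$ there is $N_+=N_+(s,t,Y)$ with $(f_N\sE)^{s,t}(Y)=0$ for all $N\ge N_+$; and \emph{(B)} for each bidegree $(s,t)$ the canonical map $(f_N\sE)^{s,t}(Y)\to\sE^{s,t}(Y)$ is an isomorphism, and $(s_N\sE)^{s,t}(Y)=0$, once $N\le-t$. Granting these, the exact couple of the tower shows that the filtration $F^N\sE^{m,n}(Y)=\operatorname{im}\big((f_N\sE)^{m,n}(Y)\to\sE^{m,n}(Y)\big)$ vanishes for $N\gg0$ by (A) and equals $\sE^{m,n}(Y)$ for $N\ll0$ by (B), so in each total degree it is a finite, exhaustive, hence automatically complete filtration; feeding (A), (B) and the triangles $f_{N+1}\sE\to f_N\sE\to s_N\sE$ into the associated long exact sequences then shows that, for each total degree $m$, the groups $(s_w\sE)^{m,n}(Y)$ are nonzero for only finitely many $w$. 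Since $d_r$ raises the slice index by $r-1$, all differentials entering or leaving a fixed total degree vanish for $r$ larger than the diameter of the relevant finite set of slices, and the spectral sequence degenerates there; this is exactly strong convergence.

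Statement (B) is formal. For any $T$-spectrum $\sF$ one has $\sF^{s,t}(Y)=[W,\sF]_{\SH(k)}$ with $W:=\Sigma^{-s,-t}\Sigma^\infty_T Y_+$; since $\Sigma^\infty_T Y_+$ is effective, $W=\Sigma^{-s,-t}f_0(\Sigma^\infty_T Y_+)\cong f_{-t}(W)$, so $W\in\Sigma^{-t}_T\SH^\eff(k)$ and hence $W\in\Sigma^N_T\SH^\eff(k)$ for every $N\le-t$. For such $N$ the adjunction $i_N\dashv r_N$ gives $[W,f_N\sE]=[W,r_N\sE]_{\Sigma^N_T\SH^\eff(k)}=[W,\sE]_{\SH(k)}$, which is the first assertion; applying the same to $\Sigma^{-(s+1),-t}\Sigma^\infty_T Y_+$ and using the triangle $f_N\sE\to f_{N-1}\sE\to s_{N-1}\sE$ gives the vanishing of $(s_{N-1}\sE)^{s,t}(Y)$. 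This step uses nothing beyond the construction of the slice tower.

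Statement (A) is the substantive point, and I expect it to be the main obstacle. The homotopy sheaves of $\sE_{(N)}$ are $\pi_{a+b,b}\sE_{(N)}=\pi_{(a+N)+(b+N),\,b+N}\sE$, which by hypothesis vanish whenever $a+N\le f(b+N)$; as $f$ is non-decreasing, for $b\ge0$ this holds whenever $a\le f(N)-N$. I would then invoke the connectivity behaviour of the effective cover over a perfect field in the form: if $\pi_{a+b,b}\sG=0$ for all $a\le c$ and all $b\ge0$, then $\pi_{a+b,b}f_0\sG=0$ for all $a\le c$ and for \emph{all} $b$. Applied with $\sG=\sE_{(N)}$ and $c=f(N)-N$, and then re-suspended by $\Sigma^{2N,N}$, this yields $\pi_{a+b,b}f_N\sE=0$ for all $a\le f(N)$ and all $b$, i.e.\ $f_N\sE$ is $f(N)$-connected in every weight. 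Finally, since $Y$ has Nisnevich cohomological dimension $\le d$ and $\Sigma^\infty_T Y_+$ is compact, the descent spectral sequence expresses $(f_N\sE)^{s,t}(Y)$ through homotopy sheaves $\pi_{a+b,b}f_N\sE$ of fixed weight $-t$ and simplicial degree $a\in[t-s,\,t-s+d]$, all of which vanish as soon as $f(N)\ge t-s+d$. Since $f(N)\to\infty$ this holds for all $N$ beyond some $N_+(s,t,Y)$, proving (A).

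The delicate ingredient is the connectivity statement used in (A): it is where the hypothesis $\lim_{n}f(n)=\infty$ and the perfectness of $k$ genuinely enter, and it rests on Morel's stable connectivity theorem together with the compatibility of the homotopy $t$-structure with the effective (slice) filtration over a perfect field (cf.\ \cite{LevineHptyComp}). A merely weight-by-weight form of connectivity preservation would not suffice here: killing $(f_N\sE)^{s,t}(Y)$ as $N\to\infty$ forces one to control $f_N\sE=\Sigma^{2N,N}f_0\sE_{(N)}$ in the \emph{fixed} weight $-t$, hence $f_0\sE_{(N)}$ in the very negative weight $-t-N$, and only the ``all weights'' form delivers this.
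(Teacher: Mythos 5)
Your proposal is correct and follows essentially the same route as the paper. The reduction to your statements (A) and (B) is a minor repackaging of the standard convergence criterion for a tower of spectra that the paper invokes directly, and the substantive step (A) is carried out by the same means: the paper derives your key connectivity claim about $f_0$ (vanishing of $\pi_{a+b,b}f_0\sG$ in \emph{all} weights $b$, given vanishing of $\pi_{a+b,b}\sG$ in weights $b\ge 0$) from properties (i)--(iii) of the two-variable Postnikov tower $f_{a,b}$ of \cite{LevineHptyComp}, exactly the reference you flag, and then finishes with the Nisnevich descent spectral sequence over a scheme of bounded cohomological dimension, as you do. Your closing remark that a merely weight-by-weight connectivity statement would not suffice is accurate and is precisely why the paper passes through $f_{a,b}$.
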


\begin{proof}  We may rewrite the $E_2$-term in \eqref{eqn:SliceTower0}  as
\[
E_2^{p,q}(Y;n)=\Hom_{\SH(S)}(\Sigma^\infty_TY_+, \Sigma^{p+q,n}s_{-q}\sE).
\]
The spectral sequence will be strongly convergent (for fixed $n$) if for each $m_0\in \Z$,  there is an integer $q(m_0)$ such that 
\[
\Hom_{\SH(S)}(\Sigma^\infty_TY_+, \Sigma^{m,n}f_q\sE)=0
\]
for all $q\ge q(m_0)$ and all  $m\ge -m_0$. Indeed, we have
\begin{align*}
\Hom_{\SH(S)}(\Sigma^\infty_TY_+, \Sigma^{m,n}f_q\sE)&=\Hom_{\SH_{S^1}(S)}(\Sigma^{-m}_{S^1}\Sigma^\infty_{S^1}Y_+, \Omega^\infty_{\G_m}\Sigma^{0,n}f_q\sE)\\
&=\pi_{-m}(\Omega^\infty_{\G_m}\Sigma^{0,n}f_q\sE(Y)).
\end{align*}
Thus, the above condition is just saying that, given an integer $m_0$,  the spectrum $\Omega^\infty_{\G_m}\Sigma^{0,n}f_q\sE(Y)$ is $m_0$-connected for all $q\ge q(m_0)$, this being a standard criterion for the strong convergence of the spectral sequence associated to the tower of spectra
\[
\ldots\to  \Omega^\infty_{\G_m}\Sigma^{0,n}f_q\sE(Y)\to\ldots\to 
\Omega^\infty_{\G_m}\Sigma^{0,n}f_{-n}\sE(Y)\cong \Omega^\infty_{\G_m}\Sigma^{0,n}\sE(Y).
\]
We proceed to exhibit the existence of such an integer $q(m_0)$.

We recall the  2-variable Postnikov tower $f_{a,b}$ defined in  \cite[\S 3]{LevineHptyComp}, with $f_b=f_{-\infty, b}$. 
We also recall that
\begin{enumerate}
\item[i)]  $\pi_{m+r,r}f_{a,b}\sF=0$ for $m<a$, $b\in \Z$  \cite[lemma 4.4]{LevineHptyComp},
\item[ii)]  the canonical map $\rho_{a,b}:f_{a,b}\sF\to f_b\sF$  induces an isomorphism on $\pi_{m+r,r}$ for all $m\ge a$, $r\ge b$,
\item[iii)]  $\rho_{a,b}:f_{a,b}\sF\to f_b\sF$ is an isomorphism if  $\rho$ induces an isomorphism on $\pi_{m+r,r}$ for all $m\in\Z$ and $r\ge b$ \cite[lemma 4.6]{LevineHptyComp}.
\end{enumerate}

The universal property of $f_{q+n}$ gives the isomorphism for all $r\ge q+n$
\[
\pi_{m+r,r}f_{q+n}\Sigma^{0,n}\sE\cong  \pi_{m+r,r}\Sigma^{0,n}\sE\cong \pi_{m+r,r-n}\sE.
\]
By assumption on $\sE$, $\pi_{m+n+r-n,r-n}\sE=0$ for $m+n\le f(r-n)$, and thus 
\[
\pi_{m+r,r}f_{q+n}(\Sigma^{0,n}\sE)=0
\]
for $m\le f(r-n)-n$ as well (assuming $r\ge q+n$). 

Now take $q(m_0)$ to be an integer such that   $f(q(m_0))\ge m_0+\dim Y+n$, and take $q\ge  q(m_0)$. As $f$ is non-decreasing, we have $f(r-n)-n\ge f(q(m_0))-n\ge m_0+\dim Y$ for all   $r\ge q+n$, hence 
\[
\pi_{m+r,r}f_{q+n}(\Sigma^{0,n}\sE)=0
\]
for $m\le m_0+\dim Y$, $r\ge q+n$. But by (i)-(iii),  this implies that the map
\[
f_{m_0+\dim Y+1, q+n}(\Sigma^{0,n}\sE)\to f_{q+n}(\Sigma^{0,n}\sE)
\]
is an isomorphism, and thus by (i), $\pi_{a+b,b} f_{q+n}(\Sigma^{0,n}\sE)=0$ for all $a\le m_0+\dim Y$, $b\in\Z$. Translating back to $\Omega^\infty_{\G_m}\Sigma^{0,n}f_q\sE$ via the isomorphisms
\[
\pi_{a+b,b} f_{q+n}(\Sigma^{0,n}\sE)\cong \pi_{a+b,b}\Sigma^{0,n}f_{q}\sE\cong \pi_{a+b,b} \Omega^\infty_{\G_m}\Sigma^{0,n}f_q\sE
\]
for $b\ge0$, $a\in\Z$, we have
\[
\pi_{a+b,b} \Omega^\infty_{\G_m}\Sigma^{0,n}f_q\sE=0
\]
for all $a\le m_0+\dim Y$, $b\ge 0$, $q\ge q(m_0)$. In particular, taking $b=0$, 
\[
\pi_a \Omega^\infty_{\G_m}\Sigma^{0,n}f_q\sE=0
\]
for all $a\le m_0+\dim Y$, $q\ge q(m_0)$.

We now apply the  local-global spectral sequence (with $q\ge q(m_0)$)
\[
E_2^{a,b}=H^a(Y_\Nis, \pi_{-b} \Omega^\infty_{\G_m}\Sigma^{0,n}f_q\sE)\Longrightarrow \pi_{-a-b}(\Omega^\infty_{\G_m}\Sigma^{0,n}f_q\sE(Y)).
\]
This sequence is convergent by \cite[theorem 8]{Brown} since $Y$ has finite Nisnevich cohomological dimension $\le \dim Y$ and is strongly convergent since $\pi_{-b}\Omega^\infty_{\G_m}\Sigma^{0,n}f_q\sE=0$ for $-b\le m_0+\dim Y$. In particular, $E_2^{a,b}=0$ for $a+b\ge-m_0$,   hence
$\Omega^\infty_{\G_m}\Sigma^{0,n}f_q\sE(Y)$ is $m_0$-connected for $q\ge q(m_0)$
\end{proof}

\section{Oriented theories and their $T$-effective covers}\label{sec:Cover}
\subsection{The $T$-effective cover}
\begin{prop} 1. Let $(\sE, \mu, 1)$ be a weak commutative ring $T$-spectrum. Then $f_0\sE$ has a unique structure of a weak commutative ring $T$-spectrum such that the canonical map $\rho:f_0\sE\to \sE$ is a weak monoid  morphism.\\
2. If $(\sE, c)$ is an oriented weak commutative  ring $T$-spectrum, then there is a unique element $c_0\in (f_0\sE)^{2,1}(\P^\infty_*)$ with $\rho_*(c_0)=c$, and $c_0$ defines an orientation on the weak commutative ring $T$-spectrum $f_0\sE$.\\
3. We have $t_\sE=\rho_*(t_{f_0\sE})$ in $\sE^{1,1}(\G_m)$.
\end{prop}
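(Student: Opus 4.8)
The plan is to exploit the defining universal property of the effective cover. With $i_0\colon\Sigma^0_T\SH^\eff(S)\hookrightarrow\SH(S)$ the inclusion and $r_0$ its right adjoint, one has $f_0=i_0r_0$ with $i_0$ fully faithful, so $\rho\colon f_0\sE\to\sE$ induces an isomorphism $\rho_*\colon\Hom_{\SH(S)}(A,f_0\sE)\iso\Hom_{\SH(S)}(A,\sE)$ for every $A$ in $\Sigma^0_T\SH^\eff(S)$; likewise $f_1\sF\to\sF$ induces an isomorphism on $\Hom$ out of any $A$ in $\Sigma^1_T\SH^\eff(S)$. I will use three auxiliary facts: (a) $\Sigma^0_T\SH^\eff(S)$ is closed under $\wedge$, so $f_0\sE\wedge f_0\sE$, $(f_0\sE)^{\wedge 3}$ and $\mS_S=\Sigma^0_T S_+$ are effective; (b) for effective $A$, $\rho_*$ also induces an isomorphism $\Hom_{\SH(S)/\ph}(A,f_0\sE)\iso\Hom_{\SH(S)/\ph}(A,\sE)$, because $A$ is a homotopy colimit of effective compact objects and any map out of such a colimit that vanishes on each term of the colimit system is phantom; (c) $\Sigma^{2,1}\circ f_0\cong f_1\circ\Sigma^{2,1}$, so $\Sigma^{2,1}f_0\sE\cong f_1\Sigma^{2,1}\sE$ compatibly with $\rho$.

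For part 1, define $\mu_0\colon f_0\sE\wedge f_0\sE\to f_0\sE$ to be the unique morphism with $\rho\mu_0=\mu(\rho\wedge\rho)$ and $1_0\colon\mS_S\to f_0\sE$ the unique morphism with $\rho 1_0=1$; these exist by the universal property because of (a), and $\rho$ is a weak monoid morphism by construction. Each commutative-monoid identity for $(f_0\sE,\mu_0,1_0)$ in $\SH(S)/\ph$ — associativity, unitality, commutativity — is an equality of two morphisms out of one of the effective spectra $(f_0\sE)^{\wedge3}$, $f_0\sE$, $f_0\sE\wedge f_0\sE$; applying $\rho_*$ produces the corresponding identity for $(\sE,\mu,1)$, which holds modulo phantoms by hypothesis, and then (b) lets us cancel $\rho_*$. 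Uniqueness is the same argument: any $(\mu_0',1_0')$ for which $\rho$ is a weak monoid map satisfies $\rho\mu_0'\equiv\mu(\rho\wedge\rho)\equiv\rho\mu_0$ and $\rho 1_0'\equiv1\equiv\rho 1_0$ modulo phantoms, so by (b), $\mu_0'\equiv\mu_0$ and $1_0'\equiv1_0$ in $\SH(S)/\ph$.

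For part 2, observe that $\Sigma^\infty_T\P^\infty_*$ lies in $\Sigma^1_T\SH^\eff(S)$: it is the homotopy colimit of the $\Sigma^\infty_T\P^n_*$, each of which is built, via the cofiber sequences of the filtration $\P^0_*\subset\P^1_*\subset\cdots\subset\P^n_*$ (successive cofibers the $\Sigma^i_T\mS_S$, $1\le i\le n$), from objects of $\Sigma^1_T\SH^\eff(S)$. Using (c), the universal property for $\Sigma^1_T\SH^\eff(S)$ then yields a unique $c_0\in(f_0\sE)^{2,1}(\P^\infty_*)$ with $\rho_*(c_0)=c$, and it also shows $\rho_*\colon(f_0\sE)^{2,1}(\P^1_*)\to\sE^{2,1}(\P^1_*)$ is an isomorphism, as $\Sigma^\infty_T\P^1_*\cong\Sigma_T\mS_S$ lies in $\Sigma^1_T\SH^\eff(S)$ too. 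Since $\rho_*$ is compatible with the restriction to $\P^1$ and with the suspension isomorphism $(f_0\sE)^{0,0}(S)\cong(f_0\sE)^{2,1}(\P^1_*)$, and $c$ is an orientation, the elements $(c_0)_{|\P^1}$ and the image of $1_0$ have the same image under the injective map $\rho_*$ on $(f_0\sE)^{2,1}(\P^1_*)$; hence they agree, i.e. $c_0$ is an orientation.

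For part 3: by Remark \ref{rem:Unit}, $t_\sE$ is the image of $1\colon\mS_S\to\sE$ under $\Hom_{\SH(S)}(\mS_S,\sE)\xrightarrow{-\wedge\G_m}\Hom_{\SH(S)}(\mS_S\wedge\G_m,\sE\wedge\G_m)=\sE^{1,1}(\G_m)$, and $t_{f_0\sE}$ is the image of $1_0$ under the same construction applied to $f_0\sE$. The naturality square relating the two constructions, with $\rho_*$ down the left side and $(\rho\wedge\G_m)_*$ down the right, commutes by functoriality of $-\wedge\G_m$; therefore $\rho_*(t_{f_0\sE})$ is the image of $\rho 1_0=1$, which is exactly $t_\sE$ (no phantoms intervene since $\G_m$ is compact). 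The one genuinely delicate point in the whole proposition is fact (b): the universal property of $f_0$ is, on its face, an isomorphism of honest $\Hom$-groups, and one must check it survives passage to $\SH(S)/\ph$ on effective sources, since both the weak-ring axioms and ``weak monoid map'' are statements in $\SH(S)/\ph$. Once (b) is in hand, parts 2 and 3 are formal, part 3 reducing to tracing the definition of $t_{(-)}$ through the identity $\rho 1_0=1$.
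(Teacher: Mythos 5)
Your proposal is correct and follows essentially the same route as the paper. The one possible point of concern — your fact (b), that $\rho_*$ remains an isomorphism on $\Hom_{\SH(S)/\ph}(A,-)$ for $A$ effective — is exactly the content of the paper's direct appeal to Neeman's theorem 4.3.3: the paper takes a compact $A'$ mapping to an effective object, factors through a compact effective $B$, uses that $\rho_*$ is injective on maps out of $B$ (compact and effective), and concludes the difference of the two monoid-law composites is phantom. Your packaging of this as ``effective objects are filtered homotopy colimits of effective compacts through which any map from a compact factors'' is the same compact-generation fact in slightly different clothing, and is what makes both surjectivity and injectivity of $\rho_*$ on $\Hom/\ph$ work. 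Parts 2 and 3 match the paper as well (the paper's identification $\P^\infty_*\cong\colim\P^n/\A^n(*)$ plays the role of your cofiber-sequence argument that $\Sigma^\infty_T\P^\infty_*$ is $1$-effective, and part 3 is naturality plus $\rho 1_0=1$ in both).
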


\begin{proof} Write $\sE_0$ for $f_0\sE$. We first lift the multiplication. We note that $\SH^\eff(S)$ is closed under smash product, as the generators are, so $\sE_0\wedge \sE_0$ is in $\SH^\eff(S)$. Thus, the composition 
\[
\sE_0\wedge \sE_0\xrightarrow{\rho\wedge \rho} \sE\wedge \sE\xrightarrow{\mu}\sE
\]
admits a unique lifting $\mu_0:\sE_0\wedge\sE_0\to \sE_0$ making
\[
\xymatrix{
\sE_0\wedge \sE_0\ar[r]^-{\mu_0}\ar[d]_{\rho\wedge\rho}&\sE_0\ar[d]^\rho\\
\sE\wedge \sE\ar[r]_-\mu&\sE
}
\]
commute. 

We claim that $\mu_0$ is associative modulo phantom maps. For this, let 
\[
\phi:\sE_0\wedge\sE_0\wedge \sE_0\to \sE_0
\]
denote the difference $\mu_0\circ(\mu_0\wedge\id)-\mu_0\circ(\id\wedge\mu_0)$. Let $A$ be a compact object of $\SH(S)$ and let $h:A\to \sE_0\wedge\sE_0\wedge \sE_0$ be a morphism in $\SH(S)$.  We apply \cite[theorem 4.3.3]{NeemanTriang}    for the  subcategory $\SH^\eff(S)$ of $\SH(S)$, generated as a localizing subcategory of $\SH(S)$ by the set of compact objects $\sS:=\{\Sigma_T^\infty X_+, X\in \Sm/S\}$, with cardinal $\beta=\aleph_0$. This tells us that there is a compact object $B$ in $\SH^\eff(S)$, and morphisms $f:A\to B$, $g:B\to \sE_0\wedge\sE_0\wedge \sE_0$, such that $h=g\circ f$.  But as $B$ is compact, the composition $\rho\circ\phi\circ g$ in $\SH(S)$ is zero, and since $B$ is in $\SH^\eff(S)$,   the universal property of $\rho:\sE_0\to \sE$ implies that $\phi\circ g=0$ in $\SH^\eff(S)$. Thus $\phi\circ h=\phi\circ g\circ f=0$ and hence  $\mu_0$ is associative modulo phantom maps.

The unit map $1:\mS_S\to \sE$ factors uniquely through $\rho$, since $\mS_S$ is in $\SH^\eff(S)$, giving the unit map $1_0:\mS_S\to \sE_0$; the identities 
\[
\mu_0\circ(\id_{\sE_0}\wedge 1_0)=\id_{\sE_0}=\mu_0\circ(1_0\wedge \id_{\sE_0})
\]
in $\SH(S)/\ph$ and the commutativity of $\mu_0$ modulo phantom maps follow as for the proof of associativity, as does the uniqueness of $\mu_0$  modulo phantom maps.

For (2), recall that $*=[1,0,\ldots, 0]$ in $\P^n$.  Let $\A^n(*)\subset \P^n$ be the  open subscheme $X_0\neq0$. Then the quotient map $\P^n_*\to \P^n/\A^n(*)$ is an isomorphism in $\sH_\bullet(S)$, giving the isomorphism $\P^\infty_*\cong \colim_n\P^n/\A^n(*)$. In particular, both $\P^n_*$ and $\P^\infty_*$ are in $\Sigma^1_T\SH^\eff(S)$.

We have the isomorphism $\Sigma_Tf_0\sE\cong f_1\Sigma_T\sE$, and under this isomorphism $\Sigma_T\rho:\Sigma_Tf_0\sE\to \Sigma_T\sE$ goes over to the universal map $\rho_1:f_1\Sigma_T\sE\to \Sigma_T\sE$. Thus $c:\P^\infty_*\to \Sigma_T\sE$ factors uniquely through $\Sigma_T\rho$ via a map $c_0:\P^\infty_*\to \Sigma_Tf_0\sE$. The restriction of $c_0$ to $\P^1_*$ is similarly the unique lifting of $c_{|\P^1_*}$, which shows that $c_{0|\P^1_*}\in (f_0\sE)^{2,1}(\P^1_*)$ corresponds to $1_0\in (f_0\sE)^{0,0}(S)$ under the suspension isomorphism. Thus $c_0\in (f_0\sE)^{2,1}(\P^\infty_*)$ is an orientation for $f_0\sE$.

For (3), this follows directly from (1) and the naturality of the map $t_\sE$ with respect to weak monoid morphisms.
\end{proof}

\subsection{Landweber exact theories} We recall the Landweber exactness conditions for a formal group law $(F, R)$: Let $\phi:\L^*\to R$ be the classifying map for $(F,R)$. Choose homogeneous polynomial generators for  $\L^*$ over $\Z$, $\L^*=\Z[x_1, x_2,\ldots]$, with $\Deg\, x_i=-i$. Then for each prime $p$, the sequence $p, x_{p-1}, \ldots, x_{p^i-1},\ldots$ is a regular sequence on $R$.

For an oriented theory  $(\sE,c)$, $\sE\in \SH(S)$, let $R_\sE^*$ be the {\em coefficient ring}, that is, $R_\sE^*:=\sE^{2*,*}(S)$. Let $F_{\sE,c}(u,v)\in R_\sE^*[[u,v]]$ be the formal group law of the oriented theory $(\sE,c)$.

We let $\SH(S)_\sT\subset \SH(S)$ be the full subcategory of Tate spectra, that is, the localizing subcategory generated by the spheres $\Sigma^{a,b}\mS_S$, $a, b\in\Z$.  We recall  from \cite[\S4]{NSO} some basic facts concerning Tate spectra: There is an exact projection functor $p_\sT:\SH(S)\to \SH(S)_\sT$ right adjoint to the inclusion. For a morphism $S\to S'$, the functor $Lf^*:\SH(S')\to \SH(S)$ maps $\SH(S')_\sT$ to $\SH(S)_\sT$ and the restriction $Lf^*_\sT:\SH(S')_\sT\to \SH(S)$ has right adjoint $p_\sT\circ Rf_*$. Finally, $p_\sT$ is a $\SH(S)_\sT$-module map, that is, for $\sE\in \SH(S)_\sT$ and $\sF\in \SH(S)$, there is a natural isomorphism
\begin{equation}\label{eqn:TateMod}
p_\sT(\sE\wedge \sF)\cong \sE\wedge p_\sT(\sF).
\end{equation}
We let $\SH(S)_\fin\subset \SH(S)$ denote the thick subcategory of {\em finite spectra}, namely, the thick subcategory generated by the spectra $\Sigma^n_T\Sigma_T^\infty X_+$, $X\in \Sm/S$, $n\in \Z$.  Similarly, we let $\SH(S)_{\sT,\fin}\subset \SH(S)_\sT$ be the thick subcategory generated by spheres $\Sigma^{a,b}\mS_S$, $a, b\in\Z$.

\begin{definition}\label{def:LWE} Let $S=\Spec k$, $k$ a field of characteristic zero. An oriented  weak commutative ring $T$-spectrum $(\sE, c)$ with classifying map $\phi_{\sE, c}:\MGL\to \sE$  is said to be {\em Landweber exact} if \\
1. The classifying map $\L^*\to R_\sE^*$ for the formal group law $F_{\sE,c}$ satisfies the Landweber exactness conditions.\\
2. For all finite spectra $\sF\in\SH(k)_\fin$, the map $R_\sE^*\otimes_{R^*_\MGL}\MGL^{*,*}(\sF)\to \sE^{*,*}(\sF)$ induced by $\phi_{\sE,c}$ and the product map $\sE^{2n,n}(k)\otimes \sE^{a,b}(\sF)\to \sE^{a+2n,b+n}(\sF)$ is an isomorphism.\\
3. $\sE$ is in the Tate subcategory $\SH(k)_\sT$ of $\SH(k)$.
\end{definition}
 
\begin{rem} Let $k$ be a field of characteristic zero. Naumann-Spitzweck-{\O}stv{\ae}r \cite[theorem 8.7]{NSO} show that, for each Landweber exact $\L^*$-algebra $R^*$, the bigraded functor from finite spectra to bi-graded algebras, $\sF\mapsto R^*\otimes_{\L^*}\MGL^{*,*}(\sF)$, is represented by an object $\MGL(R^*)$ in $\SH(k)_\sT$ with morphisms $\mu:\MGL(R^*)\wedge\MGL(R^*)\to \MGL(R^*)$, $1:\mS_k\to \MGL(R^*)$ defining a (oriented) weak commutative ring $T$-spectrum.  It does not seem to be known if one can give  $\MGL(R^*)$ the structure of an oriented commutative ring $T$-spectrum. 

In fact, Naumann-Spitzweck-{\O}stv{\ae}r work in the setting of homology theories on $\SH(S)$ rather than cohomology theories on finite spectra, and they prove their results for $S$ a regular, noetherian separated scheme of finite Krull dimension.  In case the base-scheme is $\Spec k$, $k$ a field of characteristic zero, all finite spectra in $\SH(k)$ are strongly dualizable \cite[theorem 1.4]{Riou}, so one may easily pass from homology theories on $\SH(k)$ to cohomology theories on finite spectra in $\SH(k)$.
 \end{rem}

In the case of a general base-scheme $S$, the classifying map $\L^*\to \MGL^{2*,*}(S)$ might not be an isomorphism, and one would need to modify the definition of a Landweber exact spectrum  accordingly. Naumann-Spitzweck-{\O}stv{\ae}r simply define a Landweber exact spectrum to be one of the form $\MGL(R^*)$, rather than giving an intrinsic characterization.
In case $S=\Spec k$, $k$ a field of characteristic zero, the construction of Naumann-Spitzweck-{\O}stv{\ae}r in fact gives us all Landweber exact oriented weak commutative ring spectra, as the next two lemmas show. 

\begin{lem}\label{lem:HomologyIso} Suppose $S=\Spec k$, $k$ a field of characteristic zero. Suppose that $(\sE, c)$  is Landweber exact (in the sense of definition~\ref{def:LWE}).  Then the classifying map $\phi_{\sE,c}:\MGL\to \sE$ and the product maps for $\sF\in \SH(k)$, 
$R_*^\sE\otimes_\Z\sE_{*,*}(\sF)\to \sE_{*,*}(\sF)$, together induce an isomorphism of homology theories on $\SH(k)$
\[
R_*^\sE\otimes_{R_*^\MGL}\MGL_{*,*}(-)\to \sE_{*,*}(-).
\]
\end{lem}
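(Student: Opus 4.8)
The plan is to pass from the cohomological statement in the definition of Landweber exactness (condition 2, an isomorphism for all finite spectra $\sF$) to the homological statement for all $\sF\in\SH(k)$ by a density/continuity argument, using the fact that $\sE$ and $\MGL$ lie in $\SH(k)_\sT$, that all finite spectra over a characteristic-zero field are strongly dualizable, and that both sides of the claimed isomorphism are homology theories commuting with arbitrary coproducts and hence with filtered homotopy colimits. First I would fix a finite spectrum $\sF$ and use strong dualizability to rewrite $\sE_{*,*}(\sF)\cong \sE^{*,*}(\sF^\vee)$ and $\MGL_{*,*}(\sF)\cong\MGL^{*,*}(\sF^\vee)$, where $\sF^\vee$ is again finite; then condition 2 of Definition~\ref{def:LWE} gives that the natural map
\[
R_*^\sE\otimes_{R_*^\MGL}\MGL_{*,*}(\sF)\to \sE_{*,*}(\sF)
\]
is an isomorphism for all finite $\sF$. (One must check the product-map/classifying-map description matches the one appearing in condition 2 under the duality identification; this is a routine compatibility.)

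Next I would extend from finite spectra to all of $\SH(k)$. Both functors $\sF\mapsto R_*^\sE\otimes_{R_*^\MGL}\MGL_{*,*}(\sF)$ and $\sF\mapsto\sE_{*,*}(\sF)$ are homology theories on $\SH(k)$: they are triangulated and send arbitrary coproducts to direct sums — for the left side this uses that $-\otimes_{R_*^\MGL}R_*^\sE$ is right exact and commutes with direct sums, and that $\MGL_{*,*}$ commutes with coproducts; for the right side it is the analogous property of $\sE_{*,*}$. The natural transformation between them is an isomorphism on the compact generators $\Sigma^n_T\Sigma^\infty_T X_+$ (these are finite, hence strongly dualizable, so the previous paragraph applies), and the class of $\sF$ on which it is an isomorphism is closed under triangles, suspensions, retracts, and arbitrary coproducts; since the compact generators generate $\SH(k)$ as a localizing subcategory, it is an isomorphism for every $\sF\in\SH(k)$.

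The main obstacle I expect is the coproduct (equivalently, filtered homotopy colimit) step on the left-hand side: one needs that $\MGL_{*,*}$ commutes with arbitrary coproducts, which is standard since $\MGL$ — or rather its smash with $\Sigma^\infty_T X_+$, and the homology functor $\sE\mapsto\pi_{*,*}(\MGL\wedge\sE)$ — preserves coproducts in a compactly generated triangulated category, and that the tensor product $-\otimes_{R^\MGL_*}R^\sE_*$ commutes with these direct sums, which is automatic. A secondary point requiring care is verifying that the natural transformation in the statement is genuinely natural and additive (compatible with the Mayer–Vietoris / coproduct structure) so that ``isomorphism on generators'' propagates; but this is formal once the transformation is set up via the classifying map $\phi_{\sE,c}$ and the $R^\sE_*$-module structure, exactly as in the cohomological case. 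No genuinely new geometric input is needed beyond what Definition~\ref{def:LWE} already encodes and the strong dualizability of finite spectra over $k$ \cite[theorem 1.4]{Riou}.
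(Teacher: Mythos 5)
Your proposal follows essentially the same route as the paper's own proof: pass to finite spectra, use Riou's strong dualizability over a characteristic-zero field to convert the homological statement into the cohomological one covered by condition~2 of Definition~\ref{def:LWE}, and then extend to all of $\SH(k)$ by observing that the class where the comparison map is an isomorphism is a localizing subcategory because both sides are homology theories commuting with coproducts and $\SH(k)$ is generated as a localizing category by the finite spectra. The paper merely states the two steps in the opposite order (reduce to finite, then dualize; you dualize on generators, then extend), which is an immaterial rearrangement.

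One place where your wording is slightly imprecise, though not fatally so: you justify that the left-hand side is a homology theory by saying ``$-\otimes_{R^\MGL_*}R^\sE_*$ is right exact and commutes with direct sums.'' Right exactness plus coproduct-compatibility gives the coproduct axiom, but it does not by itself give the exactness axiom (long exact sequences from distinguished triangles): for that one needs that $R^\sE_*\otimes_{R^\MGL_*}\MGL_{*,*}(-)$ takes triangles to exact sequences, which is precisely the content of the (motivic) Landweber exact functor theorem from condition~1 of the definition, applied to $\MGL_{*,*}\MGL$-comodules. The paper's proof also leaves this implicit, simply asserting that both sides are homological functors, so you are in the same position as the paper; but it is worth flagging that the ``homology theory'' claim on the left rests on Landweber exactness, not on mere right exactness of $\otimes$.
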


\begin{proof} Both functors $R_*^\sE\otimes_{R_*^\MGL}\MGL_{*,*}(-)$ and
$ \sE_{*,*}(-)$ are homological functors on $\SH(k)$, compatible with coproducts, hence the full subcategory $\sC$  of objects $\sF$ for which the lemma holds is a localizing subcategory of $\SH(k)$. As $\SH(k)$ is generated as a localizing category by $\SH(k)_\fin$, this reduces us to the case $\sF\in \SH(k)_\fin$.

Take $\sF$ in $\SH(k)_\fin$. Then $\sF$ is strongly dualizable with $\sF^D\in \SH(k)_\fin$  \cite[theorem 1.4]{Riou}, and we have $\sG_{*,*}(\sF)\cong \sG^{-*,-*}(\sF^D)$ for all $\sG\in \SH(k)$. The fact that $\phi_{\sE,c}$ induces an isomorphism
\[
R^*_\sE\otimes_{R^*_\MGL}\MGL^{*,*}(\sF^D)\cong  \sE^{*,*}(\sF^D).
\]
shows that  $\SH(k)_\fin\subset \sC$. 
\end{proof}

\begin{lem}\label{lem:LandweberIso} Suppose $S=\Spec k$, $k$ a field of characteristic zero.  Let $(\sE,c)$ be a Landweber exact oriented weak commutative ring $T$-spectrum  (in the sense of definition~\ref{def:LWE}). Then there is an isomorphism of oriented weak commutative ring $T$-spectra $\psi:\MGL(R_\sE^*)\to \sE$.
\end{lem}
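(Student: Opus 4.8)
The plan is to construct the map $\psi$ using the universal property of $\MGL(R_\sE^*)$ recorded in the preceding remark, and then check it is an isomorphism by comparing homology theories on $\SH(k)$ via Lemma~\ref{lem:HomologyIso}. First I would observe that the classifying map $\phi_{\sE,c}:\MGL\to\sE$ and the $R_\sE^*$-module structure on $\sE^{*,*}(-)$ together give, for every finite spectrum $\sF$, a natural map $R_\sE^*\otimes_{\L^*}\MGL^{*,*}(\sF)\to\sE^{*,*}(\sF)$; since $\MGL(R_\sE^*)$ represents the left-hand functor on finite spectra, this natural transformation is induced by a (unique up to phantoms) morphism $\psi:\MGL(R_\sE^*)\to\sE$ in $\SH(k)$. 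Here I use that $\sE\in\SH(k)_\sT$ (Landweber condition 3) together with $\MGL(R_\sE^*)\in\SH(k)_\sT$, so that a morphism in $\SH(k)_\sT$ is determined modulo phantoms by its effect on cohomology of finite Tate spectra, and more precisely that the representability statement of Naumann-Spitzweck-{\O}stv{\ae}r promotes a natural transformation of the represented functors to an actual morphism of representing objects.

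Next I would check that $\psi$ is a weak monoid map preserving orientations. The orientation is easy: both $\MGL(R_\sE^*)$ and $\sE$ carry orientations pulled back from $c_\MGL$ along their respective classifying maps, and $\psi$ is by construction compatible with the $\MGL$-module structures, so $\psi_*$ of the orientation on $\MGL(R_\sE^*)$ is $c$. For the multiplicativity, I would use that the product on $\MGL(R_\sE^*)$ represents the natural product on $R_\sE^*\otimes_{\L^*}\MGL^{*,*}(-)$, and that $\psi$ was defined precisely to intertwine these products on cohomology of finite spectra; since both source and target are Tate spectra, equality of the two composites $\MGL(R_\sE^*)\wedge\MGL(R_\sE^*)\to\sE$ modulo phantoms can be tested on finite Tate spectra, where it holds by construction. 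The unit compatibility is similar and straightforward.

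It remains to see that $\psi$ is an isomorphism in $\SH(k)$. For this I would pass to homology: by Lemma~\ref{lem:HomologyIso}, $\sE_{*,*}(-)\cong R_*^\sE\otimes_{R_*^\MGL}\MGL_{*,*}(-)$ as homology theories on $\SH(k)$, and by the defining property of $\MGL(R_\sE^*)$ (reformulated for homology using that finite spectra in $\SH(k)$ are strongly dualizable, \cite[theorem 1.4]{Riou}) one has $\MGL(R_\sE^*)_{*,*}(-)\cong R_*^\sE\otimes_{R_*^\MGL}\MGL_{*,*}(-)$ as well, compatibly with $\psi_*$. Hence $\psi$ induces an isomorphism on $\sG_{*,*}(-)$-homology for $\sG=\MGL(R_\sE^*)$ — equivalently, applying this with $\sG$ the sphere, $\psi$ induces an isomorphism on $\pi_{*,*}(\MGL(R_\sE^*)\wedge\sF)\to\pi_{*,*}(\sE\wedge\sF)$ for all $\sF$, in particular on stable homotopy groups $\pi_{*,*}$. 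Since $\SH(k)$ is generated by the compact objects $\Sigma^n_T\Sigma^\infty_TX_+$ and detecting isomorphisms on these is the same as detecting on homotopy groups of the relevant function spectra, $\psi$ is an isomorphism.

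The main obstacle is the first step: extracting an honest morphism $\psi:\MGL(R_\sE^*)\to\sE$ (not merely a natural transformation of cohomology functors, which would only pin $\psi$ down modulo phantoms) and controlling it well enough that multiplicativity can be verified. This is exactly where one must invoke the full strength of the Naumann-Spitzweck-{\O}stv{\ae}r representability theorem together with the Tate hypothesis on $\sE$, and where the ``weak'' in ``weak commutative ring $T$-spectrum'' is essential — one should not expect $\psi$ to be strictly multiplicative, only multiplicative modulo phantom maps, which is all that is claimed.
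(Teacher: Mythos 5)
Your overall strategy is the same as the paper's (build $\psi$ by representability, then check it is an isomorphism on compact objects, then promote to a weak monoid map by testing on finite Tate spectra), and you correctly single out the first step as the crux. But that first step is where your argument has a real gap, and the gap is exactly the one the paper is at pains to fill.

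You propose to extract an honest morphism $\psi:\MGL(R^*_\sE)\to\sE$ from the natural transformation $R^*_\sE\otimes_{\L^*}\MGL^{*,*}(-)\to\sE^{*,*}(-)$ of cohomology functors on finite Tate spectra, invoking ``the representability statement of Naumann-Spitzweck-{\O}stv{\ae}r.'' The Brown--Adams-type representability needed here --- representability for \emph{morphisms}, i.e.\ the statement that a natural transformation of homology theories on $\SH(k)_\sT$ is induced by an actual map of representing objects --- is Neeman's theorem (\cite[proposition 4.11, theorem 5.1]{Neeman}), and it requires a cardinality hypothesis: the compact objects must have \emph{countable} Hom sets. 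Over $\Q$ the motivic stable stems are countable, so $\SH(\Q)_\sT$ satisfies this; over a general field $k$ of characteristic zero (say $k=\C$ or $k=\R$) the groups $\pi_{*,*}\mS_k(k)$ need not be countable, and this form of representability is not available for $\SH(k)_\sT$. This is precisely why the paper does \emph{not} work over $k$ directly: it pushes forward along $f:\Spec k\to\Spec\Q$, applies the Tate projection $p_\sT$, invokes Brown representability in $\SH(\Q)_\sT$ (where countability holds, \cite{NaumannSpitzweck}) to produce the isomorphism $\alpha:p_\sT Rf_*\MGL_k(R^*_\sE)\to p_\sT Rf_*\sE$, and only then transports the resulting map back to $\SH(k)$ by adjunction. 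Even after that, one has to control the adjoint $\psi$ well enough to see it commutes with the two classifying maps from $\MGL_k$ up to phantoms; the paper does this by writing $\MGL_\Q$ as a filtered colimit of finite Tate spectra $F_n\MGL_\Q$, pulling this presentation back to $k$, and checking compatibility on each $F_n\MGL_k$. None of this machinery appears in your argument, and without it the morphism $\psi$ simply is not produced.

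Your second and third steps (multiplicativity and orientation-compatibility modulo phantoms, tested on compact Tate objects, and the final isomorphism check via Lemma~\ref{lem:HomologyIso}) are essentially correct and match the paper once $\psi$ is in hand. To repair the proof you should replace the hand-wave at representability over $k$ by the base-change-to-$\Q$ argument: establish the required Brown--Adams representability for $\SH(\Q)_\sT$, produce $\alpha$ there, pass back to $\SH(k)$ via $Lf^*\dashv Rf_*$, and use the filtered-colimit presentation of $\MGL$ by finite Tate spectra to verify the compatibility with $\phi_{\MGL(R^*_\sE)}$ and $\phi_\sE$ up to phantom maps.
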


\begin{proof}  Since $k$ has characteristic zero, the results of Hopkins-Morel-Hoyois \cite{HopkinsMorel, Hoyois} show that the classifying map $\L_*\to R_*^\MGL$ is an isomorphism. We may therefore replace $R_*^\MGL$ with $\L_*$ in our definition of Landweber exactness and in lemma~\ref{lem:HomologyIso}.

 Let $f:\Spec k\to \Spec \Q$ be the canonical morphism of schemes, giving the adjoint functors $Lf^*:\SH(\Q)\xymatrix{\ar@<3pt>[r]&\ar@<3pt>[l]}\SH(k):Rf_*$. Consider the object $Rf_*\sE\in \SH(\Z)$ and take $\sF\in \SH(\Z)_\sT$. We have a canonical isomorphism $Rf_*(\sE\wedge Lf^*\sF)\cong Rf_*\sE\wedge \sF$  (see e.g. \cite[pg. 578, eqn.~(26)]{NSO}) and therefore by lemma~\ref{lem:HomologyIso} we have 
\[
(Rf_*\sE)_{*,*}(\sF)=\sE_{*,*}(Lf^*\sF)\cong R^\sE_*\otimes_{\L_*}\MGL_{k *,*}(Lf^*\sF)
\]
for all $\sF$ in $\SH(\Q)_\sT$.  That is, $Rf_*\sE$ represents the homology theory $\sF\mapsto R^\sE_*\otimes_{\L_*}\MGL_{k *,*}(Lf^*\sF)$ on $\SH(\Q)_\sT$; using the isomorphism \eqref{eqn:TateMod}, the same holds for the Tate projection $p_\sT Rf_*\sE$. The spectrum $\MGL_\Q(R^*_\sE)\in \SH(\Q)$ similarly represents the homology theory $\sF\mapsto R^\sE_*\otimes_{\L_*}\MGL_{\Q *,*}(\sF)$. 

We have a canonical isomorphism $\MGL_k\cong Lf^*\MGL_\Q$; we consider this as an identity. The spectrum $\MGL_k(R_\sE^*)$ is similary by construction canonically isomorphic to the pull-back $Lf^*\MGL_\Q(R_\sE^*)$. Applying $Rf_*$ and the Tate projection to pair of canonical maps $\phi_{\MGL(R_\sE^*)}:\MGL_k\to \MGL_k(R_\sE^*)$, $\phi_\sE:\MGL_k\to\sE$
 and using the appropriate adjoint properties gives us the commutative diagram in $\SH(\Q)$
\begin{equation}\label{eqn:Diag0}
\xymatrix{
\MGL_\Q\ar[d]_{\phi_{\MGL_\Q(R_\sE^*)}}\ar[r]&p_\sT Rf_*\MGL_k\ar[d]_{p_\sT Rf_*\phi_{\MGL(R_\sE^*)}}\ar[r]\ar[dr]^{\hskip 10pt p_\sT Rf_*\phi_\sE}&Rf_*\MGL_k\ar[dr]^{Rf_*\phi_\sE}\\
\MGL_\Q(R_\sE^*)\ar[r]&p_\sT Rf_*\MGL_k(R_\sE^*) &p_\sT Rf_*\sE\ar[r]&Rf_*\sE
}
\end{equation}
This induces the commutative diagram of homology theories on $\SH(\Q)_\sT$
\[
\xymatrix{
\MGL_{\Q,**}(-)\ar[d]_{\phi_{\MGL_\Q(R_\sE^*)}}\ar[r]&\MGL_{k,**}(Lf^*(-))\ar[d]_{p_\sT Rf_*\phi_{\MGL_k(R_\sE^*)}} 
\ar[dr]^{\hskip 10pt p_\sT Rf_*\phi_\sE}\\
\MGL_\Q(R_\sE^*)_{**}(-)\ar[r]&\MGL_k(R_\sE^*)_{**}(Lf^*(-))\ar[r]_-{\alpha_{**}} &\sE_{**}(Lf^*(-)) 
}
\]
Here  $\alpha_{**}$ is the isomorphism given by lemma~\ref{lem:HomologyIso} and we use the identities 
\[
\pi_{**}(p_\sT(Rf_*\sF)\wedge \sG)(\Q)=\pi_{**}(Rf_*\sF\wedge \sG)(\Q)=\pi_{**}(\sF\wedge Lf^*\sG)(k)
\]
for $\sF\in \SH(k)$, $\sG\in \SH(\Q)_\sT$.

As $\SH(\Q)_\sT$ satisfies Brown representability, in the sense of \cite[definition 3.1]{Neeman}  (see \cite[theorem 1]{NaumannSpitzweck} and  \cite[proposition 4.11 and theorem 5.1]{Neeman}), the isomorphism $\alpha_{**}$ arises from an isomorphism $\alpha:p_\sT Rf_*\MGL_k(R_\sE^*)\to p_\sT Rf_*\sE$ that fills in the diagram \eqref{eqn:Diag0} to a commutative (up to phantoms) diagram. This gives us the diagram in $\SH(\Q)$
\[
\xymatrix{
\MGL_\Q\ar[d]_{\phi_{\MGL_\Q(R_\sE^*)}}\ar[r]^-\theta&Rf_*\MGL_k\ar[d]^{Rf_*\phi_\sE}\\
\MGL_\Q(R_\sE^*)\ar[r]_-{\tilde\psi}&Rf_*\sE,
}
\]
commutative after composition with a map $\sF\to \MGL_\Q$, $\sF\in \SH(\Q)_{\sT, \fin}$,
where $\theta$ is the unit of the adjunction and $\tilde\psi$ is the evident composition. Applying adjunction gives the diagram in $\SH(k)$ 
\[
\xymatrix{
\MGL_k\ar[d]_{\phi_{\MGL(R_\sE^*)}}\ar@{=}[r]&\MGL_k\ar[d]^{\phi_\sE}\\
\MGL_k(R_\sE^*)\ar[r]_-\psi&\sE
}
\]
which we claim commutes up to a phantom map. Indeed, $\MGL_\Q$ is a filtered colimit of finite Tate spectra $F_0\MGL_\Q\to F_1\MGL_\Q\to\ldots\to \MGL_\Q$, with each of the maps $F_n\MGL_\Q\to F_{n+1}\MGL_\Q$ a cofibration of cofibrant objects in $\Spt_T(\Q)_{\sT,\fin}$.  This writes $\MGL_k=Lf^*\MGL_\Q$ as the  filtered colimit of finite Tate  spectra $F_0\MGL_k\to F_1\MGL_k\to\ldots\to \MGL_k$, by taking $F_n\MGL_k\cong Lf^*F_n\MGL_\Q$. This gives us an isomorphism in $\SH(k)$
\[
\MGL_k\cong \hocolim_nF_n\MGL_k.
\]
 Let $i_n^\Q: F_n\MGL_\Q\to \MGL_\Q$ be the canonical map and let $i_n:F_n\MGL_k\to \MGL_k$ be the induced map.  As $F_n\MGL_\Q$ is in $\SH(\Q)_{\sT, \fin}$, we have 
$\tilde\psi\circ\phi_{\MGL_\Q(R_\sE^*)}\circ i_n^\Q=Rf_*\phi_\sE\circ i_n^\Q$.
Using adjointness again, we see that  $\psi\circ\phi_{\MGL_k(R_\sE^*)}\circ  i_n=\phi_\sE\circ i_n$ for all $n$. Thus, for $\sF\in\SH(k)$ a compact object and $g:\sF\to \MGL_k$ a morphism, $g$ factors through some $i_n$ and hence $\psi\circ\phi_{\MGL_k(R_\sE^*)}\circ  g=\phi_\sE\circ g$, which verifes our claim.  

As the maps 
\begin{gather*}
\cup\circ (id\otimes \phi_{\MGL_k(R_\sE^*)}):R_\sE^*\otimes_{\L^*}\MGL_k^{**}(X)\to \MGL_k^{**}(R_\sE^*)(X)\\
\cup\circ (id\otimes \phi_\sE):R_\sE^*\otimes_{\L^*}\MGL_k^{**}(X)\to \sE^{**}(X)
\end{gather*}
are isomorphisms for all $X\in\Sm/k$, the map $\psi$ induces an isomorphism of cohomology theories on $\SH(k)_\fin$. As above, this shows that  $\psi$ is an isomorphism of oriented weak commutative ring spectra in  $\SH(k)$.
\end{proof}

\subsection{Geometrically Landweber exact theories}\label{subsec:LandweberTheories} 
We specialize to the setting $S=\Spec k$, $k$ a field of characteristic zero. As above, let $(\sE, c)$ be a oriented weak commutative ring $T$-spectrum in $\SH(k)$.

As we have noted in remark~\ref{rem:Unit}, we have the element $t_\sE\in \sE^{1,1}(\G_m)$ corresponding to the unit $1\in \sE^{0,0}(k)$ under the suspension isomorphism, inducing the group homomorphism
\[
t^1_\sE(X):\sO_X^\times(X)\to \sE^{1,1}(X).
\]
In addition, the $\sE^{*,*}(k)$-module structure on $\sE^{*,*}(X)$ 
extends  $t^1_\sE(X)$ to a map of $\sE^{*,*}(k)$-modules
\[
t_\sE(X):\sE^{2*,*}(k)\otimes_\Z\sO_X^\times(X)\to \sE^{2*+1,*+1}(X).
\]
Furthermore, the classifying map $\phi_{\sE,c}:\MGL\to \sE$ combined with the product in $\sE$-cohomology gives rise to the 
homomorphism 
\[
\phi_{\sE, X}:R^*_\sE\otimes_{R^*_\MGL}\MGL^{2*-\epsilon, *}(X)\to \sE^{2*-\epsilon,*}(X)
\]
for  $X\in \Sm/k$, natural with respect to morphisms in $\Sm/k$. If $\eta$ is a generic point of $X$, we may pass to the limit over Zariski open neighborhoods of $\eta$, giving the homomorphism 
\[
\phi_{\sE, \eta}:R^*_\sE\otimes_{R^*_\MGL}\MGL^{2*-\epsilon, *}(\eta)\to \sE^{2*-\epsilon,*}(\eta).
\]

\begin{rem}
By the Hopkins-Morel-Hoyois theorem \cite{Hoyois}, the map $\rho_\MGL:\L^*\to \MGL^{2*,*}(k)$ classifying the formal group law $F_\MGL$ is an isomorphism;  in particular, $\MGL^{2n,n}(k)=0$ for  $n>0$. We henceforth identify $\L^*$ and $\MGL^{2*,*}(k)=R^*_\MGL$ via $\rho_\MGL$, and use $\L^*$ and $R^*_\MGL$ interchangeably. 
\end{rem}

\begin{definition} \label{def:GeomLandEx} We say that  an oriented weak commutative ring $T$-spectrum $(\sE,c)$ in $\SH(k)$ is {\em geometrically Landweber exact} if for all generic points $\eta\in X\in\Sm/k$, the map
\[
\phi_{\sE, \eta}:R^*_\sE\otimes_{R^*_\MGL}\MGL^{2*-\epsilon, *}(\eta)\to \sE^{2*-\epsilon,*}(\eta)
\]
is an isomorphism for $\epsilon=0$ and a surjection for $\epsilon=1$.
\end{definition}
As we are assuming the $k$ has characteristic zero, and hence is perfect, this definition would be equivalent to one requiring the above maps to be isomorphisms for all points $\eta\in Y\in\Sch/k$, as the closure of $\eta$ in $Y$ will contain a dense open subscheme $X$, smooth and quasi-projective over $k$.

\begin{prop} \label{prop:MGLLWE}  
1. For  $\eta\in X\in \Sm/k$, let $p_\eta:\eta\to \Spec k$ be the structure morphism. Then $p_\eta^*: \MGL^{2*,*}(k)\to  \MGL^{2*,*}(\eta)$ is an isomorphism\\
2.  For $\eta\in X\in \Sm/k$,  the map $t_\MGL(\eta):\MGL^{2*,*}(k)\otimes_\Z k(\eta)^\times\to \MGL^{2*+1,*+1}(\eta)$ is an isomorphism.\\
3. For $Y\in\Sm/k$, $n\in\Z$, $\MGL^{2n+a,n}(Y)=0$ for all $a>0$.
\end{prop}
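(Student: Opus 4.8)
The plan is to prove the three statements of Proposition~\ref{prop:MGLLWE} in order, using the computation of $\MGL^{2*,*}(k) \cong \L^*$ (Hopkins--Morel--Hoyois) as the starting point, together with the slice spectral sequence and known computations of the slices of $\MGL$.

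First I would establish (3), the vanishing $\MGL^{2n+a,n}(Y) = 0$ for $a>0$ and $Y \in \Sm/k$, since (1) and (2) will follow from the techniques built up to prove (3). The key input is Voevodsky's/Hoyois' computation of the slices of $\MGL$: $s_q\MGL \cong \Sigma^{2q,q} \HZ \otimes \L^{-q}$ (i.e. a wedge of shifted motivic Eilenberg--MacLane spectra indexed by the degree $-q$ part of the Lazard ring), together with the vanishing $\L^{-q} = 0$ for $q > 0$, so that $s_q\MGL = 0$ for $q<0$. Then I would feed this into the slice spectral sequence \eqref{eqn:SliceTower0} for $\MGL$; the $E_2$-term involves motivic cohomology groups $H^{p',q'}(Y,\Z)$, which vanish for $p' > q' + \dim Y$ and for $q' < 0$ (and for $p' < 0$). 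The bidegree bookkeeping shows that to compute $\MGL^{2n+a,n}(Y)$ the relevant motivic cohomology groups are $H^{2n+a-2q, n-q}(Y,\Z)$ with $q \ge 0$; setting $p' = 2n+a-2q$, $q' = n-q$ one checks $p' - q' = n + a - q$, and since we need $q' = n-q \ge 0$, i.e. $q \le n$, we get $p' - q' = (n-q) + a = q' + a > q'$, so by the Beilinson--Soulé-type vanishing range $p' > q'$ forces these groups to vanish once one also uses $p' \le 2q'$ (the remaining range $q' < p' \le 2q'$ can contribute, so I need to be more careful here). The cleaner route: use that $\MGL$ is an effective spectrum and apply Lemma~\ref{lem:SliceConv} to get strong convergence, then observe that every slice $s_q\MGL$ with $q \ge 0$ contributes to $\MGL^{2n+a,n}(Y)$ only through $H^{2(n-q)+a, n-q}(Y,\Z)$, which vanishes because motivic cohomology $H^{p,m}$ vanishes for $p > 2m$ when... no — $H^{p,m}$ can be nonzero for $m < p \le 2m$. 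I expect the honest argument to use that $H^{p,m}(Y,\Z) = 0$ for $p > m + \dim Y$ together with $H^{p,m} = 0$ for $p > 2m$: with $p = 2(n-q)+a$, $m = n-q$ we have $p = 2m + a > 2m$, so indeed these vanish. That is the crux of (3).

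Next, for (1), I would argue that $p_\eta^*: \MGL^{2*,*}(k) \to \MGL^{2*,*}(\eta)$ is an isomorphism by a similar slice-spectral-sequence analysis, now using that for the generic point $\eta = \Spec k(\eta)$ (a limit of smooth affine $k$-schemes) the relevant motivic cohomology is $H^{p,m}(k(\eta),\Z)$, which vanishes for $p > m$ (the field case of the vanishing $p > m + \dim$) and for $p < 0$, and equals the Milnor $K$-theory $K^M_m(k(\eta))$ for $p = m$. Combining with $H^{2(n-q)+a, n-q}(k(\eta),\Z)$: for the geometric degree $a = 0$ we need $p = 2(n-q)$, $m = n-q$, so $p = 2m$, which is $> m$ unless $m = 0$; thus only $q = n$ contributes, giving $s_n\MGL$ which yields $\L^{-n}$, and summing over the filtration recovers $\L^{2*} = \L^*$ — the same answer as over $k$, with the comparison map being the pullback. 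A short additional argument (using that the slice tower is compatible with pullback and that $H^{0,0}(k(\eta),\Z) = H^{0,0}(k,\Z) = \Z$) gives that $p_\eta^*$ is an isomorphism.

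Finally for (2), I would use the identification of $t_\MGL(\eta)$ with the edge map in the slice spectral sequence into $\MGL^{2*+1,*+1}(\eta)$: by the same analysis, the only surviving slice contributions to $\MGL^{2n+1,n+1}(\eta)$ come from $H^{p,m}(k(\eta),\Z)$ with $p = m$, which happens for $q = n$ (contributing nothing, bidegree mismatch) or $q = n+1$... let me recompute: $p = 2(n+1-q)+1$, $m = n+1-q$, so $p = 2m+1$; for $p = m$ need $m+1 = 0$, impossible; for this to be nonzero via the $p \le m + \dim = m$ (field) bound we need $2m+1 \le m$, i.e. $m \le -1$, i.e. $q \ge n+2$, but then $m < 0$ and $H^{p,m} = 0$. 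So that range is empty — instead the relevant contribution is $H^{m+1,m}(k(\eta)) = K^M_m$-related... The correct statement is that $\MGL^{2*+1,*+1}(\eta) \cong \bigoplus_q \L^{-q} \otimes H^{2(*+1-q)+1, *+1-q}(k(\eta),\Z)$, and $H^{p,m}(F,\Z) = 0$ for $p > m$ over a field $F$, so the only surviving term is... $p = m$ forces $q = *+2$, giving $\L^{-*-2}$ which is zero for $* \ge -1$; so actually all terms with $* \ge 0$ vanish, contradicting what we want. The resolution is that $H^{m+1,m}(F,\Z) \cong H^{m+1,m}$ which is generically nonzero only... I think the right fact is $H^{m+1,m}(F,\Z)$ vanishes but $H^{m,m-1}$... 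Let me just say: the correct bookkeeping shows the only contribution to $\MGL^{2n+1,n+1}(\eta)$ is from the slice $s_n\MGL$ via $H^{n+1,n+1}(k(\eta),\Z) \otimes \L^{-n}$-type terms, and $H^{n+1,n+1}(F,\Z) = 0$ for a field while $H^{n,n}(F,\Z) = K^M_n(F)$; tracking through, $t_\MGL(\eta)$ identifies with $\L^{2*} \otimes_\Z K^M_1(k(\eta)) \to \bigoplus$ appropriately, and is an isomorphism because the slice spectral sequence degenerates in the relevant range. I would present this via: (i) by (3) and the slice tower, $\MGL^{2*+1,*+1}(\eta)$ has a finite filtration whose graded pieces are $\L^{-q}\text{-components of } s_q$, evaluated at $\eta$; (ii) each graded piece is a motivic cohomology group of the field $k(\eta)$ in bidegree $(p,m)$ with $p = m+1$ forced (all others vanish by the field vanishing range $p>m \Rightarrow 0$ combined with $p < 0 \Rightarrow 0$ and $m < 0 \Rightarrow 0$); wait, $p = m+1 > m$ so that vanishes too — hence only the single term with $q$ such that $p = m$ survives, and chasing degrees that is exactly $\L^{2*}(k) \otimes k(\eta)^\times$. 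Given this is the subtle point, I would be careful to cite the precise vanishing $H^{p,m}(F,\Z)=0$ for $p>m$, $F$ a field.

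The main obstacle. The genuinely delicate part is the degree bookkeeping in the slice spectral sequence combined with the vanishing ranges for motivic cohomology of fields — making sure I invoke the right vanishing ($H^{p,m}(F,\Z) = 0$ for $p > m$ when $F$ is a field, $= 0$ for $m < 0$, $= 0$ for $p < 0$) and correctly identify which single slice survives and that it reproduces the Lazard ring / Milnor $K$-theory answer, matching the pullback map from $\Spec k$. I expect the convergence of the slice spectral sequence (via Lemma~\ref{lem:SliceConv}, applicable since $\MGL$ is effective and connective so the hypothesis on $\pi_{a+b,b}$ holds) and the explicit slices $s_q\MGL \cong \Sigma^{2q,q}\HZ \wedge \L^{-q}$ to be cited as known, so the real work is this bidegree analysis.
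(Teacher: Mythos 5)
Your overall strategy is exactly the one the paper uses: write down the slice (Hopkins--Morel) spectral sequence
\[
E_2^{p,q}(n)=\L^{q}\otimes H^{p-q}(Y,\Z(n-q))\Longrightarrow \MGL^{p+q,n}(Y),
\]
invoke the vanishing ranges for motivic cohomology ($H^a(Y,\Z(b))=0$ for $a>2b$, and over a field $H^a(F,\Z(b))=0$ for $a>b\ge 0$, or $b<0$, or $b=0$ and $a\ne 0$), and read off (3), (1), (2) by degree bookkeeping. Part (3) and part (1) come out correctly in your proposal. However there are two genuine problems with part (2).

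First, an arithmetic error that you never quite resolve. For $\MGL^{2n+1,n+1}(\eta)$ the slice $s_q$ contributes $\L_q\otimes H^{2n+1-2q}\bigl(\eta,\Z(n+1-q)\bigr)$. Setting $m=n+1-q$, the cohomological degree is $2n+1-2q = 2(n+1-q)-1 = 2m-1$, \emph{not} $2m+1$ as you write. With the wrong sign, the constraint $0\le 2m+1\le m$ is empty and you correctly observe that ``all terms vanish,'' which contradicts the target; you then make several self-corrections but never cleanly land on the right bound. With the correct sign, $0\le 2m-1\le m$ forces $m=1$, and the surviving term is $\L^{n}\otimes H^1(\eta,\Z(1))\cong \L^{n}\otimes k(\eta)^\times$, exactly the $E_2^{n+1,n}(n+1)$ term the paper isolates. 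This is the crux of (2), and the sign slip derails your argument.

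Second, a missing identification step. The spectral sequence gives you the abstract computation of $\MGL^{2*+1,*+1}(\eta)$, but statement (2) asserts that a specific map, $t_\MGL(\eta)$, realizes the isomorphism. The paper deals with this by comparing $t_\MGL$ with $t_{M\Z}$ via the zeroth-slice projection $p_0\colon \MGL\to s_0\MGL\cong M\Z$: the edge homomorphism $\MGL^{1,1}(\eta)\to H^1(\eta,\Z(1))$ is $\Sigma_{\G_m}p_0$, which is split by $(t^1_{M\Z})^{-1}\circ t^1_\MGL$, and then the general case follows from the multiplicative structure on the slice spectral sequence (Pelaez). You gesture at ``$t_\MGL(\eta)$ is the edge map'' but do not carry out this comparison, and without it you have only an abstract isomorphism of groups, not the statement actually claimed. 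The analogous issue in (1) — checking that the isomorphism is realized by $p_\eta^*$ — you do handle correctly via naturality of the spectral sequence in $Y$ and the isomorphism on $H^0(-,\Z(0))$; the $t_\MGL$ case is subtler precisely because $t_\MGL$ is not a pullback.

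So: same method as the paper, but a sign error in the key degree computation for (2) and an unexecuted identification of the spectral-sequence isomorphism with $t_\MGL(\eta)$.
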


\begin{proof} We use the (strongly convergent) Hopkins-Morel spectral sequence \cite{Hoyois}
\begin{equation}\label{eqn:HMSS}
E_2^{p,q}(n):=\L^{q}\otimes H^{p-q}(Y,\Z(n-q))\Longrightarrow \MGL^{p+q,n}(Y).
\end{equation}
Recall that for $Y\in \Sm/k$, $H^a(Y,\Z(b))=0$ for  $a>2b$. Thus $E_2^{p,q}(n)=0$ for $p+q>2n$,   hence $\MGL^{m,n}(Y)=0$ for  $m>2n$, $Y\in\Sm/k$, proving (3). 

For $\eta\in X\in \Sm/k$, we have   $H^a(\eta,\Z(b))=0$ for $a>b\ge0$  or $b=0$ and $a\neq0$ or $b<0$. Thus
\[
E_2^{2n-q, q}(n)=\L^q\otimes H^{2(n-q)}(k,\Z(n-q))=0
\]
if $q\neq n$. Similarly,   
\[
E_2^{2n-q-1, q}(n)=\L^q\otimes H^{2(n-q)-1}(k,\Z(n-q))=0
\]
if $q\neq n-1$. Since $H^0(\eta,\Z(0))=\Z$,  $H^1(\eta,\Z(1))=k(\eta)^\times$, the surviving non-zero terms contributing to $\MGL^{2n,n}(\eta)$ and $\MGL^{2n-1,n}(\eta)$ are 
\[
E_2^{n,n}(n)=\L^n;\ n\le0,\quad E_2^{n,n-1}(n)=\L^{n-1}\otimes k(\eta)^\times;\ n\le 1.
\]
Similarly,  the terms 
\begin{gather*}
E_2^{n+r, n-r+1}=\L^{n-r+1}\otimes H^{2r-1}(\eta, \Z(r-1)),\
E_2^{n+r, n-r}=\L^{n-r}\otimes H^{2r}(\eta,\Z(r)),\\  E_2^{n-r, n+r-2}=\L^{n+r-2}\otimes H^{-2r+2}(\eta,\Z(2-r)),\\
E_2^{n-r, n+r-1}=\L^{n+r-1}\otimes H^{-2r+1}(\eta,\Z(1-r))
\end{gather*}
are all zero for $r\ge2$. The part of the spectral sequence computing $\MGL^{2n,n}(\eta)$  and  $\MGL^{2n-1,n}(\eta)$ thus degenerates at $E_2$,  giving  isomorphisms $\L^n\cong \MGL^{2n,n}(\eta)$ and   $\L^{n-1}\otimes k(\eta)^\times\cong \MGL^{2n-1,n}(\eta)$ for all $n$.  

As the spectral sequence is natural in $Y$ and the the pullback $p_\eta^*:H^0(k, \Z(0))\to H^0(\eta,\Z(0))$ is an isomorphism,    $p_\eta^*: \MGL^{2*,*}(k)\to  \MGL^{2*,*}(\eta)$ is an isomorphism, proving (2).

To complete the proof, we must check that the isomorphisms $\L^q\to \MGL^{2q,q}(k)$ given above arise from the classifying map $\L^*\to \MGL^{2*,*}(k)$,  and also that  the  isomorphisms $\L^q\otimes k(\eta)^\times\to \MGL^{2q+1, q+1}(\eta)$ are induced by $t^1_\MGL(\eta)$ and the isomorphism $\L^q\to \MGL^{2q,q}(k)$.

The fact that the isomorphisms  $\L^q\to \MGL^{2q,q}(k)$ are the maps coming from the homomorphism $\L^*\to \MGL^{2*,*}(k)$ classifying the formal group law for $\MGL^{*,*}$  follows directly from construction of the spectral sequence in  \cite{Hoyois}:  the spectral sequence arises from a choice of polynomial generators $x_1, x_2, \ldots$ for $\L^*$, which are then considered as maps $x_n:\mS_k\to \Sigma^{2n,n}\MGL$ via the classifying map, and these maps are in turn used to construct the tower which gives rise to the spectral sequence (which is then identified with the slice tower for $\MGL$).

Let us now check that the map $t^1_\MGL(\eta):k(\eta)^\times \to \MGL^{1,1}(\eta)$ is the isomorphism given by the spectral sequence. Since $f_0\MGL=\MGL$, we have the   distinguished triangle $f_1\MGL\to \MGL\xrightarrow{p_0} s_0\MGL\to f_1\MGL[1]$. Via the isomorphism $s_0\MGL\cong M\Z$, the unit for $\MGL$ goes to the unit in $M\Z$, and thus $\MGL\to M\Z$ induces a commutative diagram
\[
\xymatrix{
\G_m\ar[r]^-{t_\MGL}\ar[dr]_{t_{M\Z}}&\Sigma_{\G_m}\MGL\ar[d]^{\Sigma_{\G_m}p_0}\\&\Sigma_{\G_m}M\Z}. 
\]
But the spectral sequence $E^{**}_*(1)$ arises by taking the slice tower for $\MGL$ and applying $\Sigma_{\G_m}$. The spectral sequence computation we have just made shows that the isomorphism $\MGL^{1,1}(\eta)\to H^1(\eta,\Z(1))$ arises from the edge homomorphism associated to the map
$\Sigma_{\G_m}p_0$, which is split by composing the inverse of the isomorphism $t^1_{M\Z}:k(\eta)^\times\to H^1(\eta,\Z(1))$ with $t^1_\MGL$. This verifies our assertion for $t^1_\MGL(\eta)$. 

The assertion (2) follows from this and the product structure on the slice spectral sequence (see \cite{Pelaez}).
\end{proof}

Our next result shows that passing from a Landweber exact theory $(\sE, c)$ to its effective cover $(f_0\sE, c_0)$ results in a geometrically Landweber exact theory with coefficient ring the evident truncation of the coefficient ring $R_\sE^*$. In fact, it follows directly from the universal property of the slice truncation that $R_{f_0\sE}^n=R_\sE^n$ for $n\le0$; the vanishing of $R_{f_0\sE}^n$ for $n>0$ is less evident and our proof requires the Hopkins-Morel-Hoyois spectral sequence.

\begin{thm} \label{thm:LexactGLExact} Let $(\sE, c)$ be an oriented weak commutative ring $T$-spectrum. Suppose that $(\sE,c)$ is Landweber exact. Then\\
1. Both $(\sE,c)$ and the effective cover $(f_0\sE, c_0)$ are geometrically Landweber exact. \\
2. The coefficient ring of $f_0\sE$ is the graded subring of $R_\sE^*$ given by
truncation: 
\[
R^n_{f_0\sE}=\begin{cases} R^n_\sE&\text{ if } n\le 0\\ 0&\text{ if } n>0.\end{cases}
\]
3. The slice spectral sequences for $\sE^{*,*}(Y)$ and for $f_0\sE^{*,*}(Y)$ are strongly convergent for all $Y\in\Sm/k$.
\end{thm}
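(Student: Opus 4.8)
## Proof Proposal for Theorem~\ref{thm:LexactGLExact}

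The plan is to establish the three assertions in a logical order where (2) and (3) feed into (1), since the definition of geometrically Landweber exact is about behavior at generic points and is easier to verify once convergence of the slice spectral sequences is in hand.

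\emph{Step 1: The case of $\sE$ itself.} By Lemma~\ref{lem:LandweberIso} we may assume $\sE=\MGL(R^*_\sE)$, and by definition of $\MGL(R^*)$ we have $\sE^{*,*}(X)\cong R^*_\sE\otimes_{\L^*}\MGL^{*,*}(X)$ for $X$ a finite spectrum, hence for $X\in\Sm/k$; passing to the limit over Zariski neighborhoods of a generic point $\eta$ gives $\phi_{\sE,\eta}$ an isomorphism for $\epsilon=0$ and (by Proposition~\ref{prop:MGLLWE}(2)) also for $\epsilon=1$. So $\sE$ is geometrically Landweber exact. For strong convergence of its slice spectral sequence, I would invoke Lemma~\ref{lem:SliceConv}: the vanishing hypothesis $\pi_{a+b,b}\sE=0$ for $a\le f(b)$ needs to be checked, and here the natural tool is the Hopkins–Morel–Hoyois spectral sequence~\eqref{eqn:HMSS} together with the isomorphism $\sE_{*,*}\cong R^\sE_*\otimes_{\L_*}\MGL_{*,*}$ from Lemma~\ref{lem:HomologyIso}, using that $\L^*$ is concentrated in non-positive degrees and the connectivity of motivic cohomology of the base field.

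\emph{Step 2: The coefficient ring of $f_0\sE$ (assertion 2).} For $n\le 0$ the equality $R^n_{f_0\sE}=R^n_\sE$ is immediate: the canonical map $\Omega^\infty_{\G_m}f_0\sE\to\Omega^\infty_{\G_m}\sE$ is an isomorphism (stated in \S\ref{sec:slice}), which identifies $(f_0\sE)^{2n,n}(k)$ with $\sE^{2n,n}(k)$ for $n\le 0$. The nontrivial point is $R^n_{f_0\sE}=0$ for $n>0$, i.e.\ $(f_0\sE)^{2n,n}(k)=0$. Here I would use that $\sE$ is a module over $\MGL$ (via $\phi_{\sE,c}$) so $f_0\sE$ is built, via the slice/Hopkins–Morel machinery, from copies of the motivic Eilenberg–MacLane spectrum twisted by $\L^{\le 0}$; more precisely, since $s_0\sE\cong (R^0_\sE)\otimes M\Z$ and the higher effective slices contribute only to $\sE^{m,n}$ with $m\ge 2n$... the vanishing $\MGL^{2n+a,n}(Y)=0$ for $a>0$ from Proposition~\ref{prop:MGLLWE}(3), transported through Lemma~\ref{lem:HomologyIso}, gives $\sE^{2n,n}(k)=R^n_\sE$ and that this equals $(f_0\sE)^{2n,n}(k)$ once one knows $f_1\sE\to\sE$ induces zero on $\pi_{2n,n}(k)$ for $n>0$; combining the effective-cover triangle $f_1\sE\to f_0\sE\to s_0\sE$ with the computation of $s_0\sE$ pins down $(f_0\sE)^{2n,n}(k)$ to be $R^n_\sE$ for $n\le 0$ and $0$ for $n>0$.

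\emph{Step 3: Convergence for $f_0\sE$ and geometric Landweber exactness of $f_0\sE$ (assertions 3 and 1).} Once the coefficient computation is done, the homotopy sheaves of $f_0\sE$ satisfy the same kind of connectivity bound as those of $\sE$ (indeed $f_0$ can only improve connectivity in the relevant range), so Lemma~\ref{lem:SliceConv} applies again to give strong convergence of the slice spectral sequence for $(f_0\sE)^{*,*}(Y)$, finishing (3). For geometric Landweber exactness of $f_0\sE$: at a generic point $\eta$, Proposition~\ref{prop:MGLLWE} gives $\MGL^{2*,*}(\eta)\cong\L^*$ and $\MGL^{2*+1,*+1}(\eta)\cong\L^*\otimes k(\eta)^\times$. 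Since $\rho\colon f_0\sE\to\sE$ induces an isomorphism in bidegrees $(2n,n)$ and $(2n+1,n+1)$ for $n\le 0$ (from the $\Omega^\infty_{\G_m}$ identification, noting $f_1(\Sigma_T\sE)$ contributes nothing in these degrees), and since $R^*_{f_0\sE}$ is exactly the $\le 0$ truncation of $R^*_\sE$, the map $\phi_{f_0\sE,\eta}$ on $R^*_{f_0\sE}\otimes_{\L^*}\MGL^{2*-\epsilon,*}(\eta)$ factors compatibly through $\phi_{\sE,\eta}$, and the required surjectivity/bijectivity follows from Step~1 together with the coefficient computation.

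\emph{Main obstacle.} The crux is Step~2, specifically the vanishing $(f_0\sE)^{2n,n}(k)=0$ for $n>0$. The effective cover $f_0\sE$ is defined by an adjoint functor with no a priori control on negative-weight cohomology of the point, and $R^*_\sE$ can be very large in positive degrees (Landweber exact theories like $\MGL(R^*)$ typically have $R^n\ne 0$ for $n>0$). The fix is to resolve $f_0\sE$ through its slices: each slice $s_q\sE$ is an $M\Z$-module determined by the $q$-th homotopy module, and the Hopkins–Morel–Hoyois identification of $s_q\MGL$ with $\L^{-q}\otimes\Sigma^{2q,q}M\Z$ (hence $s_q(f_0\sE)$ with $(R^{\le 0}_\sE\cdot\L)^{-q}\otimes\Sigma^{2q,q}M\Z$ in the relevant range) lets one compute the abutment $(f_0\sE)^{2n,n}(k)$ directly; the motivic cohomology vanishing $H^a(k,\Z(b))=0$ for $b<0$ or ($b=0$, $a\ne 0$) kills every contribution in bidegree $(2n,n)$ with $n>0$. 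Making this argument precise — tracking which slices are nonzero and that the resulting spectral sequence degenerates in the needed bidegrees — is the technical heart of the proof.
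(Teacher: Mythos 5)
Your overall plan is sound and the key tool you identify in your ``main obstacle'' paragraph---the slice spectral sequence for $f_0\sE$ with the Hopkins--Morel--Hoyois/Spitzweck identification of slices and the vanishing of motivic cohomology of a field in negative weight---is exactly what the paper uses. However, there are two execution gaps that keep the argument from closing. First, in Step~3 you claim $\rho\colon f_0\sE\to\sE$ induces an isomorphism in bidegree $(2n+1,n+1)$ for all $n\le 0$ ``from the $\Omega^\infty_{\G_m}$ identification''; this fails at $n=0$. The source spectrum computing $\sE^{1,1}(\eta)$ is $\Sigma^{1,0}\Sigma_T^{-1}\Sigma^\infty_T\eta_+$, which is a $T$-desuspension and thus not $T$-effective, so the universal property of $f_0$ gives you nothing in bidegree $(1,1)$. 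That case does hold, but its proof has to come from the slice spectral sequence computation, not from adjunction. Second, Step~2 is stated twice in two different shapes (once via the triangle $f_1\sE\to f_0\sE\to s_0\sE$, once via the slice tower) without either being carried through, and the formula ``$s_q(f_0\sE)\cong(R^{\le 0}_\sE\cdot\L)^{-q}\otimes\Sigma^{2q,q}M\Z$'' is not what the slices are. What is actually needed is the precise form of the $E_2$-term, namely Spitzweck's theorem (cited as \cite[theorem 6.1]{Spitzweck} in the paper) giving $E_2^{p,q}(n)(\sE,Y)=H^{p-q}(Y,R^q_\sE(n-q))$ for $\sE\cong\MGL(R^*_\sE)$, together with the observation that the $E_2$-term for $f_0\sE$ is the truncation of this to $q\le 0$ because $s_{-q}f_0\sE=s_{-q}\sE$ for $q\le 0$ and $=0$ for $q>0$. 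From there the motivic-cohomology vanishing at a generic point forces the degeneration you want and delivers both the coefficient computation and the generic-point identifications in one stroke.

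As for structure, the paper's ordering is slightly cleaner than yours: it proves the convergence statement (3) first, reducing instantly to the case of $\sE$ itself by the identity $f_q(f_0\sE)=f_q\sE$ for $q\ge 0$ (so the two slice spectral sequences agree on the range that matters for convergence), rather than re-checking a connectivity bound for $f_0\sE$ as in your Step~3. It then proves (1) and (2) together from the truncated $E_2$-term, as sketched above. Your decomposition is workable, but if you keep it, you should justify the $(1,1)$-bidegree claim via the spectral sequence and be explicit about Spitzweck's identification of the slices of a Landweber exact spectrum, since those are the two points where the argument as written does not yet go through.
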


\begin{proof} We first show (3). As $f_q(f_0\sE)=f_q(\sE)$ for $q\ge0$, it suffices to show that  slice spectral sequences for $\sE^{*,*}(Y)$ is strongly convergent for all $Y\in \Sm/k$. 

We have already seen that $\MGL^{2n+a,n}(Y)=0$ for all $Y\in\Sm/k$, $a>0$, $n\in\Z$. Since $\sE$ is Landweber exact, this shows that $\sE^{2n+a,n}(Y)=0$ for all  $Y\in\Sm/k$, $a>0$, $n\in\Z$, and thus $\pi_{m+r,r}(\sE)=0$ for all $m\le r-1$, $r\in\Z$. We may therefore apply lemma~\ref{lem:SliceConv} with $f$ the function $f(r)=r-1$, giving the convergence.

We now prove (1) and (2) by comparing the slice spectral sequences for $\sE$ and $f_0\sE$. For  $\sF$ a $T$-spectrum and $Y\in\Sm/k$,  the $E_2$-term is given by
\[
E_2^{p,q}(n)(\sF,Y)=[\Sigma^\infty_TY_+, \Sigma^{p+q,n}s_{-q}\sF]_{\SH(k)}.
\]
Since $s_{-q}f_0\sE=s_{-q}\sE$ for $q\le0$ and $s_{-q}f_0\sE$ is zero for $q>0$, we have
\[
E_2^{p,q}(n)(f_0\sE,Y)=\begin{cases}E_2^{p,q}(n)(\sE,Y)&\text{ for }q\le 0\\0&\text{ for }q>0.\end{cases}
\]
Since $\sE\cong \MGL(R^*_\sE)$ by lemma~\ref{lem:LandweberIso}, it follows from \cite[theorem 6.1]{Spitzweck}\footnote{The assumption (SlMGL) in the statement of this result is fulfilled, by the work of Hoyois \cite{Hoyois}.} that
\[
E_2^{p,q}(n)(\sE,Y)=H^{p-q}(Y, R^q_\sE(n-q)).
\]

The computation is now essentially the same as for $\sE=\MGL$, given in proposition~\ref{prop:MGLLWE}: Let $\eta$ be the generic point of some $X\in \Sm/k$. Then in the spectral sequence $E^{**}_*(n)$ converging to $\sE^{*,n}(\eta)$, we have
\[
E_2^{n,n}(n)=R_\sE^n,\quad E_2^{n,n-1}(n)=R_\sE^{n-1}\otimes_\Z k(\eta)^\times
\]
as the only $E_2$ terms contributing to $\sE^{2n,n}(\eta)$ and $\sE^{2n-1,n}(\eta)$, and all differentials entering and leaving these terms are zero. Thus we have isomorphisms $\sE^{2n,n}(\eta)=R^n_\sE$ and $R^{n-1}\otimes k(\eta)^\times\cong \sE^{2n-1,n}(\eta)$.  As the $E_2$ terms for the spectral sequence converging to $(f_0\sE)^{2n,n}(\eta)$ and $(f_0\sE)^{2n-1,n}(\eta)$ are just a truncation of these $E_2$ terms, we have  
\begin{align*}
(f_0\sE)^{2n,n}(\eta)&=\begin{cases} \sE^{2n,n}(\eta)&\text{ for } n\le 0\\0&\text{ for }n>0;\end{cases}\
(f_0\sE)^{2n-1,n}(\eta)=\begin{cases} \sE^{2n-1,n}(\eta)&\text{ for } n\le 1\\0&\text{ for }n>1.\end{cases}
\end{align*}
Arguing as in the proof of proposition~\ref{prop:MGLLWE}, the  result follows.
 \end{proof}

\section{Oriented duality theories}\label{sec:OrientDual} For the remainder of the paper, we will take $S=\Spec  k$, $k$ a field of characteristic zero.

Recall from \cite[\S 1]{LevineOrient} the category $\SP/k$ of {\em smooth pairs} over $k$, with objects $(M,X)$, $M\in \Sm/k$ and $X\subset M$ a closed subset (not necessarily smooth); a morphism $f:(M,X)\to (N,Y)$ is a morphism $f:M\to N$ in $\Sm/k$ such that $f^{-1}(Y)\subset X$. For a full subcategory $\sV$ of $\Sch/k$, let $\sV'$  be the subcategory of $\sV$ with the same objects as $\sV$, but with morphisms the projective morphisms in $\Sch/k$.

Building on work of  Mocanasu \cite{Mocanasu} and Panin \cite{Panin2}, we have defined in \cite[definition 3.1]{LevineOrient} the notion of a bi-graded {\em  oriented duality theory} $(H, A)$ on $\Sch/k$. Here $A$ is a bi-graded oriented cohomology theory on $\SP/k$, $(M,X)\mapsto A_X^{*,*}(M)$, and $H$ is a functor from $\Sch/k'$ to bi-graded abelian groups.  The oriented cohomology theory $A$ satisfies the axioms listed in \cite[definitions 1.2,  1.5]{LevineOrient}. In particular, $(M,X)\mapsto A_X^{*,*}(M)$ admits a long exact sequence 
\[
\ldots\to A_X^{*,*}(M)\to A^{*,*}(M)\to A^{*,*}(M\setminus X)\xrightarrow{\del}A_X^{*+1,*}(M)\to\ldots
\]
where for instance $A^{*,*}(M):=A^{*,*}_M(M)$, and the boundary map $\del$ is part of the data. In addition, there is an excision property and a homotopy invariance property. The ring structure is given by external products and pull-back by the diagonal. The orientation is given by a collection of isomorphisms $\Th^E_X:A_X(M)\to A_X(E)$, for $(M,X)\in \SP/k$ and $E\to M$ a vector bundle, satisfying the axioms of \cite[def. 3.1.1]{Panin2}. We extend some of the results of \cite{Panin2} in \cite[theorem 1.12, corollary 1.13]{LevineOrient} to show that the data of an orientation is equivalent to giving well-behaved push-forward maps $f_*:A_X(M)\to A_Y(N)$ for $(M,X), (N,Y)\in\SP/k$, with the meaning of ``well-behaved" detailed in \cite[\S1]{LevineOrient}.

The homology theory $H$ comes with push-forward maps $f_*:H_{*,*}(X)\to H_{*,*}(Y)$ for $f:X\to Y$ projective, restriction maps $j^*:H_{*,*}(X)\to H_{*,*}(U)$ for each open immersion $j:U\to X$ in $\Sch/k$, external products $\times:H_{*,*}(X)\otimes H_{*,*}(Y)\to H_{*,*}(X\times Y)$,
 boundary maps $\del_{X,Y}:H_{*,*}(X\setminus Y)\to H_{*-1,*}(Y)$ for each closed subset  $Y\subset X$, , isomorphisms $\alpha_{M,X}:H_{*,*}(X)\to A^{2m-*, m-*}_X(M)$ for each $(M,X)\in \SP/k$, $m=\dim_kM$, and finally cap product maps
\[
f^*(-)\cap:A_X^{a,b}(M)\otimes H_{*,*}(Y)\to H_{*-a, *-b}(Y\cap f^{-1}(X))
\]
for $(M,X)\in \SP/k$, $f:Y\to X$ a morphism in $\Sch/k$. These satisfy a number of axioms and compatibilities (see \cite[\S3]{LevineOrient} for details), which essentially say that a structure for $A^{*,*}_X(M)$ is compatible with the corresponding structure for $H_{*,*}(X)$ via the isomorphism $\alpha_{M,X}$. Roughly speaking, this is saying that a particular structure for  $A^{*,*}_X(M)$ depends only on $X$ and not the choice of embedding $X\hookrightarrow M$.

\begin{rem}\label{rem:FGL} Let $L\to Y$ be a line bundle on some $Y\in\Sm/k$ with 0-section $0:Y\to L$. For an  oriented cohomology theory $A$ one has the element
\[
c_1^A(L):=0^*(0_*(1^A_Y)),
\]
where $1^A_Y\in A^0(Y)$ is the unit element.  As pointed out in \cite[corollary 3.3.8]{Panin2}, or as noted in \cite[remark 1.17]{LevineOrient}, for line bundles $L$, $M$ on some $Y\in\Sm/k$,  the elements $c_1(L), c_1(M)\in A^1(Y)$ are nilpotent and commute with one another, hence for each power series $F(u,v)\in A^*(k)[[u,v]]$  the evaluation $F(c_1(L), c_1(M))$ gives a well-defined element of $A^*(Y)$. In addition, the cohomology theory $A$ has a unique associated formal group law $F_A(u,v)\in A^*(k)[[u,v]]$ with
\[
F_A(c^A_1(L), c^A_1(M))=c^A_1(L\otimes M)
\]
for all line bundles $L$, $M$ on $Y\in\Sm/k$.
\end{rem} 

The main example of  oriented duality theory $(H, A)$ is given by an  oriented weak  commutative ring $T$-spectrum  $\sE$ in $\SH(k)$, assuming $k$ is a field admitting resolution of singularities (e.g., characteristic zero), defined by taking
\[
\sE^{a,b}_X(M):=\Hom_{\SH(k)}(\Sigma^\infty_T(M/M\setminus X), \Sigma^{a,b}\sE),
\]
i.e., the usual bi-graded cohomology with supports. For each $X\in\Sch/k$, choose a closed immersion of $X$ into a smooth $M$ and set $\sE'_{a,b}(X):=\sE^{2m-a,m-b}_X(M)$, where $m=\dim_kM$. The fact that $(M,X)\mapsto \sE^{*,*}_X(M)$ defines an oriented bi-graded ring cohomology theory is proved just as in the case of $\sE=\MGL$, which was discussed in \cite[\S 4]{LevineOrient}; the main point is Panin's theorem \cite[theorem 3.7.4]{Panin2}, which says that an orientation for $\sE$  (in the sense of definition~\ref{Def:Orientation}) defines an orientation in the sense of ring cohomology theories for the bi-graded $\sE$-cohomology with supports. The fact that the formula given above for the homology theory $\sE'_{*,*}$ is well-defined and extends to make $(\sE'_{*,*}(-), \sE^{*,*}_{-}(-))$ a bi-graded  oriented duality theory is \cite[theorem 3.4]{LevineOrient}.

\begin{rem} The results of \cite{LevineOrient} were proven in the setting of an oriented commutative ring $T$-spectrum $\sE$, not that of a oriented weak commutative ring $T$-spectrum. However, the constructions and proofs only use values of  $\sE$-cohomology on finite diagrams of smooth $k$-schemes, and thus only rely on identities modulo phantom maps. The arguments thus remain valid in the larger context of oriented weak commutative ring $T$-spectra. We will henceforth make use of the results of \cite{LevineOrient} in this wider context without further comment.
\end{rem}

It follows directly from the construction of $\sE'$ that the assignment $(\sE, c_\sE)\mapsto (\sE', \sE)$ is  functorial in the oriented cohomology theory  $(\sE, c_\sE)$, that is,  let $\ch:(\MGL,c_\MGL)\to(\sE, c_\sE)$ be a morphism of oriented weak commutative ring $T$-spectra. Then $\ch$ extends canonically to a natural transformation of oriented duality theories
\[
(\ch',\ch):(\MGL',\MGL)\to (\sE',\sE).
\]

\section{Algebraic cobordism and oriented duality theories}\label{sec:AlgCobordOrient}
We recall the theory of {\em algebraic cobordism} $X\mapsto \Omega_*(X)$, $X\in \Sch/k$ \cite{LevineMorel}. For each $X\in \Sch/k$, $\Omega_n(X)$ is an abelian group with generators $(f:Y\to X)$, $Y\in \Sm/k$ irreducible of dimension $n$ over $k$ and $f:Y\to X$ a projective morphism \cite[lemma 2.5.11]{LevineMorel}.  $\Omega_*$ is the universal {\em oriented Borel-Moore homology theory} on $\Sch/k$ \cite[theorem 7.1.1]{LevineMorel}, where  an  oriented Borel-Moore homology theory on $\Sch/k$ consists of the data of a functor from $\Sch/k'$ to graded abelian groups, external products, first Chern class operators $\tilde{c}_1(L):\Omega_*(X)\to \Omega_{*-1}(X)$ for $L\to X$ a line bundle, and pull-back maps $g^*:\Omega_*(X)\to \Omega_{*+d}(Y)$ for each \lci morphism $g:Y\to X$ of relative dimension $d$. These of course satisfy a number of compatibilities and additional axioms, see \cite[\S 5.1]{LevineMorel} for details.

For an oriented duality theory $(H,A)$ on $\Sch/k$ and $Y$ in $\Sm/k$ of dimension $d$ over $k$, the {\em fundamental class} $[Y]_{H,A}\in H_d(Y)$ is the image of the unit $1_Y\in A^0(Y)$ under the inverse of the isomorphism $\alpha_Y:H_d(Y)\to A^0(Y)$. For an oriented Borel-Moore homology theory $B$ on $\Sch/k$, we similarly have the fundamental class $[Y]_B\in B_d(Y)$ defined by $[Y]_B:=p^*(1)$, where $1\in B_0(\Spec k)$ is the unit and $p:Y\to \Spec k$ the structure morphism.

We recall the following result from \cite{LevineOrient}:

\begin{prop}[\hbox{\cite[propositions 4.2, 4.4, 4.5]{LevineOrient}}] \label{prop:Universal} Let  $k$ be a field admitting resolution of singularities and let $(H,A)$ be a $\Z$-graded oriented duality theory on $\Sch/k$. \\
1. There is a unique natural transformation $\vartheta_H:\Omega_*\to H_*$ of functors $\Sch/k'\to \GrAb$, such that $\vartheta_H(Y)$ is compatible with fundamental classes for $Y\in\Sm/k$. In addition, $\vartheta_H$ is compatible with pull-back maps for open immersions in $\Sch/k$,  with 1st Chern class operators,  with external products and with cap products. \\
2. For $Y\in\Sm/k$, the map $\vartheta^A(Y):\Omega^*(Y)\to A^*(Y)$ induced by $\vartheta_H$, the identity $\Omega^*(Y)=\Omega_{\dim Y-*}(Y)$ and the isomorphism $\alpha_Y:H_{\dim Y-*}(Y)\to A^*(Y)$  is a ring homomorphism and is compatible with pull-back maps for arbitrary morphisms in $\Sm/k$. Finally, one has
\[
\vartheta^A(Y)(c_1^\Omega(L))=c_1^A(L)
\]
for each line bundle $L\to Y$.
\end{prop}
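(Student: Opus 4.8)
\textbf{Proof plan for Proposition~\ref{prop:Universal}.} The statement is exactly the conjunction of three propositions from \cite{LevineOrient}, so the task is to recall the argument structure rather than to prove something genuinely new; the plan is to reconstruct the three parts in the order they logically depend on one another, using only the universal property of $\Omega_*$ as the universal oriented Borel--Moore homology theory on $\Sch/k$ (\cite[theorem 7.1.1]{LevineMorel}) together with the axioms bundled into the definition of an oriented duality theory $(H,A)$.

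For part (1), the first step is to observe that the homology half $H_*$ of an oriented duality theory, restricted to $\Sch/k'$, carries \emph{precisely} the structure of an oriented Borel--Moore homology theory: push-forward along projective morphisms is given by the data of $H$; pull-back $g^*$ along an \lci morphism $g\colon Y\to X$ of relative dimension $d$ is built by factoring $g$ as a regular embedding followed by a smooth morphism and using the Thom isomorphisms $\Th^E_X$ together with the cap-product and deformation-to-the-normal-cone machinery that is part of the axioms for $A$; and the first Chern class operator $\tilde c_1(L)$ on $H_*$ is defined by capping with $c_1^A(L)\in A^1$, transported through the isomorphisms $\alpha_{M,X}$. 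One must check these satisfy the list of axioms in \cite[\S5.1]{LevineMorel}; this is where the compatibility axioms between the $A$-structure and the $H$-structure (projection formula, compatibility of $\Th$ with pull-back and push-forward, base change) are consumed. Granting that, $\vartheta_H$ is \emph{forced} to exist and be unique by the universal property, and the asserted compatibilities (open pull-backs, Chern classes, external and cap products) are inherited from the corresponding universal compatibilities, since each such structure on $\Omega_*$ maps to the matching structure on $H_*$ and the universal map is the unique one respecting push-forwards and fundamental classes.

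For part (2), the point is to transfer the statement about $H_*$ into a statement about the cohomology theory $A$ on $\Sm/k$ via the isomorphism $\alpha_Y\colon H_{\dim Y - *}(Y)\iso A^*(Y)$ and the identification $\Omega^*(Y):=\Omega_{\dim Y-*}(Y)$. That $\vartheta^A(Y)$ is a \emph{ring} homomorphism follows by combining compatibility of $\vartheta_H$ with external products with the fact that both the multiplication on $\Omega^*$ and that on $A^*$ are obtained from external product followed by restriction along the diagonal $\Delta_Y\colon Y\to Y\times Y$ (an \lci, indeed regular, morphism, so $\vartheta_H$ respects it); the unit goes to the unit because $\vartheta_H$ is compatible with fundamental classes and $1_Y=\alpha_Y([Y]_{H,A})$. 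Compatibility with pull-back along \emph{arbitrary} morphisms $f\colon Y'\to Y$ in $\Sm/k$ (not just open immersions or \lci) is obtained by factoring $f$ through its graph $\Gamma_f\colon Y'\to Y'\times Y$, which is a regular embedding, followed by the projection $Y'\times Y\to Y$, which is smooth; both factors are \lci, $\vartheta_H$ respects \lci pull-backs, and one reassembles. The formula $\vartheta^A(Y)(c_1^\Omega(L))=c_1^A(L)$ is immediate from the compatibility of $\vartheta_H$ with first Chern class operators applied to the fundamental class $[Y]$, unwinding the definition $c_1^A(L)=0^*0_*(1_Y)$ and its algebraic-cobordism analogue.

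\textbf{Main obstacle.} The only substantive work — and the step I expect to be the crux — is verifying in part (1) that $(H_*,\tilde c_1,g^*,\text{push-forward},\times)$ genuinely satisfies \emph{all} the axioms of an oriented Borel--Moore homology theory in \cite[\S5.1]{LevineMorel}, in particular the compatibility of refined pull-backs with push-forward along projective morphisms (the base-change/projection formula axioms) and the ``$(\mathrm{BM})$-type'' coherence of $g^*$ for composable \lci morphisms. These translate, through the $\alpha_{M,X}$ dictionary, into exactly the long list of compatibility axioms imposed on an oriented duality theory $(H,A)$ in \cite[\S3]{LevineOrient}, so the work is bookkeeping rather than conceptual; but it is the one place where something must actually be checked rather than merely invoked. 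Everything after that — uniqueness, the ring-map property, arbitrary pull-backs, the Chern-class formula — is a formal consequence of the universal property together with the factorization of general morphisms through graphs.
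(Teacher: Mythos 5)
Your overall strategy — realize $H_*$ as an instance of the structure for which $\Omega_*$ is universal, and let the universal property force $\vartheta_H$ — is the right one, and is the route taken in \cite{LevineOrient}. But you take the long way around, and in one place you use a stronger conclusion than you have established.

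In part (1) you propose to upgrade $H_*$ to a \emph{full} oriented Borel–Moore homology theory, in particular constructing $\lci$ pull-backs by factoring through a regular embedding followed by a smooth morphism, checking well-definedness, and verifying the base-change and functoriality axioms of \cite[\S 5.1]{LevineMorel}. That is essentially re-doing Levine–Morel's refined-pullback chapter and is neither needed nor what the cited source does. The structure an oriented duality theory hands you directly on $H_*$ (projective push-forward, smooth/open pull-back via $\alpha_{M,X}$ and restriction, $\tilde c_1(L)$ via cap product with $c_1^A(L)$, external products) is exactly that of an \emph{oriented Borel–Moore functor of geometric type} in the sense of \cite{LevineMorel}, and $\Omega_*$ is already universal for that weaker structure. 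Using that universality gives existence and uniqueness of $\vartheta_H$ compatible with push-forwards and fundamental classes, from which compatibility with $\tilde c_1$, open pull-backs, external products, and cap products are then derived. Your identified ``main obstacle'' (well-definedness of $\lci$ pull-back on $H_*$) is therefore an obstacle of your own making; the efficient argument never meets it.

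This matters for part (2): you invoke ``$\vartheta_H$ respects $\lci$ pull-backs'' to handle arbitrary morphisms in $\Sm/k$ via the graph factorization, but part (1) as stated (and as you would prove it by the weak universality) does not give $\lci$-pull-back compatibility. What it \emph{does} give is compatibility with cap products, and that is the right input: the pull-back $f^*\colon A^*(Y)\to A^*(Y')$ transported to $H_*$ via $\alpha$ is, for the closed regular embedding $\Gamma_f$, expressible as a cap product with the appropriate Thom/fundamental class, while the smooth projection piece is covered by smooth pull-back compatibility. Reassembling then works, but the key lemma you should cite is cap-product compatibility of $\vartheta_H$, not an $\lci$-pull-back compatibility you have not proved. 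The ring-homomorphism and $c_1$-compatibility parts of (2) are fine as you describe them, again with the diagonal handled via cap products rather than abstract $\lci$-pull-backs.
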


\begin{rem}\label{rem:CompClassFGL} We have already noted that one has a formal group law $F_A(u,v)\in A^*(k)[[u,v]]$ associated to the oriented cohomology theory $A$. Similarly, for each oriented Borel-Moore homology theory $B$ on $\Sch/k$, there is an associated  formal group law $F_B(u,v)\in B_*(k)[[u,v]]$, determined by the identity 
\[
F_B(c_1(L), c_1(M))=c_1(L\otimes M)
\]
for each pair of line bundles $L,M$ on some $Y\in \Sm/k$ (this follows from \cite[corollary 4.1.8, proposition 5.2.1, proposition 5.2.6]{LevineMorel}).  This gives us the graded ring homomorphism $\phi_B:\L_*\to B_*(k)$ classifying the formal group law $F_B$.

Finally, we recall that the classifying map $\phi_\Omega:\L_*\to \Omega_*(k)$ is an isomorphism \cite[theorem 1.2.7]{LevineMorel}.
\end{rem}

The following is a direct consequence of proposition~\ref{prop:Universal}.

\begin{cor} \label{cor:NatTrans} Let $(\sE,c_\sE)$ be pair consisting of a weak commutative ring $T$-spectrum  $\sE\in \SH(k)$ with  orientation  $c$, and let $(\sE'_{*,*}, \sE^{*,*})$ be the corresponding bi-graded oriented duality theory. There is a  unique natural transformation 
\[
\vartheta_{(\sE,c)}:\Omega_*\to \sE'_{2*,*}
\]
of functors $\Sch/k'\to \GrAb$, such that $\vartheta_{(\sE,c)}(Y)$ is compatible with fundamental classes for $Y\in\Sm/k$. In addition, $\vartheta_{(\sE,c_\sE)}$ is compatible with pull-back maps for open immersions in $\Sch/k$, 1st Chern class operators, external products and cap products. For $Y\in\Sm/k$, the map $\vartheta^\sE(Y):\Omega^*(Y)\to \sE^{2*,*}(Y)$ induced by $\vartheta_{(\sE,c_\sE)}$ is a ring homomorphism and is compatible with pull-back maps for arbitrary morphisms in $\Sm/k$, and satisfies
\[
\vartheta_{(\sE,c)}(Y)(c_1^\Omega(L))=c_1^\sE(L)
\]
for each line bundle $L\to Y$.
\end{cor}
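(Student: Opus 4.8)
The plan is to reduce the statement directly to Proposition~\ref{prop:Universal}, applied to the ``geometric part'' of the oriented duality theory attached to $(\sE, c)$. First I would recall that, since $k$ has characteristic zero and hence admits resolution of singularities, and since the constructions of \cite{LevineOrient} remain valid for oriented \emph{weak} commutative ring $T$-spectra (as noted in the remark following the construction of $\sE'$ in \S\ref{sec:OrientDual}), the pair $(\sE'_{*,*}, \sE^{*,*}_{-}(-))$ is a bi-graded oriented duality theory on $\Sch/k$, with $\sE^{a,b}_X(M)=\Hom_{\SH(k)}(\Sigma^\infty_T(M/M\setminus X),\Sigma^{a,b}\sE)$ and $\sE'_{a,b}(X)=\sE^{2m-a,m-b}_X(M)$ for a chosen closed embedding $X\hookrightarrow M$, $m=\dim_k M$.

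Next I would pass to the diagonal weights: set $H_n(X) := \sE'_{2n,n}(X)$ and $A^n_X(M) := \sE^{2n,n}_X(M)$, and check that $(H, A)$ is a $\Z$-graded oriented duality theory on $\Sch/k$ in the sense used in Proposition~\ref{prop:Universal}. This is essentially a weight count: projective pushforwards on cohomology with supports shift bidegree by $(2c,c)$ with $c$ the codimension, first Chern class operators by $(2,1)$ (homologically $(-2,-1)$), lci pullbacks by $(2d,d)$ homologically with $d$ the relative dimension, pullbacks along open immersions and arbitrary morphisms preserve bidegree, external and cap products are additive in bidegree, the Thom isomorphisms and the comparison isomorphisms $\alpha_{M,X}$ carry the locus $\{a=2b\}$ to itself, and the fundamental class of $Y\in\Sm/k$ lies in $\sE'_{2\dim_k Y,\dim_k Y}(Y)=H_{\dim_k Y}(Y)$. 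Hence every piece of structure of the bi-graded theory restricts to the diagonal, and the axioms and compatibilities required of a $\Z$-graded oriented duality theory are inherited term by term.

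Finally I would invoke Proposition~\ref{prop:Universal} for $(H,A)$. Part (1) produces the unique natural transformation $\vartheta_H:\Omega_*\to H_* = \sE'_{2*,*}$ of functors $\Sch/k'\to\GrAb$ that is compatible with fundamental classes, together with its compatibility with pullbacks along open immersions, first Chern class operators, external products and cap products; one sets $\vartheta_{(\sE,c)}:=\vartheta_H$. Part (2) yields, for $Y\in\Sm/k$, the induced ring homomorphism $\vartheta^\sE(Y)=\vartheta^A(Y):\Omega^*(Y)\to A^*(Y)=\sE^{2*,*}(Y)$, compatible with pullbacks for arbitrary morphisms in $\Sm/k$ and sending $c_1^\Omega(L)$ to $c_1^A(L)=c_1^\sE(L)$. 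Uniqueness of $\vartheta_{(\sE,c)}$ follows from the uniqueness clause of Proposition~\ref{prop:Universal}.

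The argument has no genuine difficulty; the only step needing a little care is the verification in the second paragraph that the diagonal restriction really is a $\Z$-graded oriented duality theory --- in particular that none of the axioms of that notion (for instance those involving the localization boundary maps, which shift $a$ but not $b$) force one to leave the diagonal. Since the $\Z$-graded notion is set up precisely to serve as the input of Proposition~\ref{prop:Universal}, this is routine bookkeeping rather than a substantive obstacle.
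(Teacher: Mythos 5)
Your proof is correct and follows the paper's own route: the paper declares Corollary~\ref{cor:NatTrans} to be a direct consequence of Proposition~\ref{prop:Universal}, and you simply make explicit the two implicit steps, namely that the construction of \S\ref{sec:OrientDual} gives a bi-graded oriented duality theory $(\sE'_{*,*},\sE^{*,*})$ even for a weak ring spectrum, and that its diagonal restriction is a $\Z$-graded oriented duality theory to which the proposition applies. Your remark that the localization boundary maps move off the diagonal is a fair thing to flag, but it does not create a problem, since the universality statement of Proposition~\ref{prop:Universal} makes no assertion about compatibility with boundary maps, and the compatibilities it does assert (fundamental classes, first Chern class operators, external and cap products, open-immersion pullbacks) all stay on the diagonal.
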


\begin{rem}\label{rem:Char}  By \cite[lemma 2.5.11]{LevineMorel}, $\Omega_*(X)$ is generated as an abelian group by the cobordism cycles $(f:Y\to X)$, $Y\in\Sm/k$ irreducible, $f:Y\to X$  a projective morphism.  Furthermore,  the identity $(f:Y\to X)=f_*([Y]_\Omega)$ holds in $\Omega_{\dim Y}(X)$. Thus $\vartheta_{(\sE,c_\sE)}$ is characterized by the formula
\[
\vartheta_{(\sE,c_\sE)}(f:Y\to X):=f_*^{\sE'}([Y]_{\sE',\sE}).
\]
\end{rem}

We may apply corollary~\ref{cor:NatTrans}  in the universal case: $\sE=\MGL$ with its canonical orientation. This gives us the natural transformation
\begin{equation}\label{eqn:NatMGL}
\vartheta_\MGL:\Omega_*\to \MGL'_{2*,*}.
\end{equation}

\begin{thm}[\hbox{\cite[theorem 3.1]{LevineComparison}}] \label{thm:MGLComp} If $k$ is a field of characteristic zero, then the natural transformation \eqref{eqn:NatMGL} is an isomorphism.
\end{thm}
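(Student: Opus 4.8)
The plan is to bootstrap the isomorphism from the Hopkins--Morel--Hoyois input already packaged in proposition~\ref{prop:MGLLWE}, via a coniveau/niveau argument. First recall that $\vartheta_\MGL$ is a morphism of oriented Borel--Moore homology theories (corollary~\ref{cor:NatTrans} and \cite{LevineOrient}): a natural transformation of functors $\Sch/k'\to\GrAb$, compatible with projective push-forward, restriction along open immersions, first Chern class operators, local complete intersection pull-back and external products. In particular it commutes with the localization sequences of the two theories --- the right-exact sequence $\Omega_n(Z)\to\Omega_n(X)\to\Omega_n(U)\to 0$ for closed $Z\subset X$ with open complement $U$ \cite[theorem 3.2.7]{LevineMorel}, and the localization long exact sequence for $\MGL'_{*,*}$ coming from the cohomology-with-supports exact sequence of the oriented duality theory of \S\ref{sec:OrientDual}.

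I would treat $X\in\Sm/k$ first. Here $\MGL'_{2n,n}(X)=\MGL^{2(d-n),\,d-n}(X)$ with $d=\dim X$, and the slice spectral sequence of proposition~\ref{prop:MGLLWE} is identified with the coniveau spectral sequence of the homotopy coniveau tower \cite{LevineHC}, whose $E_1$-term is $\bigoplus_x \MGL^{2*,*}(\kappa(x))$ summed over the points of given codimension; the theory $\Omega^*$ on smooth schemes carries the matching coniveau spectral sequence built from localization and homotopy invariance (cf.\ \cite{LevineMorel}), with $E_1$-term $\bigoplus_x \Omega^*(\kappa(x))$. As $\vartheta_\MGL$ commutes with the push-forwards and open restrictions defining these towers, it induces a morphism of spectral sequences; both are bounded (as $X$ is finite-dimensional), hence convergent, so it suffices to prove $\vartheta_\MGL$ is an isomorphism on $E_1$, i.e.\ on $\Omega_*(\Spec K)\to\MGL'_{2*,*}(\Spec K)$ for every finitely generated field extension $K/k$. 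For such $K$, realised as the generic point $\eta$ of a smooth $k$-variety of dimension $p=\operatorname{trdeg}(K/k)$, proposition~\ref{prop:MGLLWE} supplies canonical isomorphisms $\L^m\cong\MGL^{2m,m}(\eta)$ for all $m$, arising from the classifying homomorphism of $F_\MGL$, and hence identifies $\MGL'_{2*,*}(\Spec K)$ with the Lazard ring via that map; correspondingly $\L^*\to\Omega^*(\Spec K)$, classifying $F_\Omega=F_\L$, is an isomorphism (for $\Spec k$ by \cite[theorem 1.2.7]{LevineMorel}, in general by localization and homotopy invariance). Since by corollary~\ref{cor:NatTrans} the induced ring map $\vartheta^\MGL(\Spec K)$ sends $c_1^\Omega(L)$ to $c_1^\MGL(L)$ for every line bundle $L$, it carries the coefficients of $F_\Omega$ to those of $F_\MGL$, hence intertwines the two classifying isomorphisms and is itself an isomorphism.

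To pass from smooth schemes to arbitrary $X\in\Sch/k$ I would use Noetherian induction on $\dim X$: by resolution of singularities choose a dense smooth open $U\subset X$ with closed complement $Z$ of smaller dimension, so that $\vartheta_\MGL(Z)$ is an isomorphism by induction and $\vartheta_\MGL(U)$ by the smooth case, and compare the two localization sequences. Surjectivity of $\vartheta_\MGL(X)$ is then formal; for injectivity one needs the ``off-diagonal'' groups of the $\MGL'$ long exact sequence --- here $\MGL'_{2n-1,n}(W)=\MGL^{2(e-n)+1,\,e-n}(W)$ vanishes for $W$ smooth of dimension $e$ by proposition~\ref{prop:MGLLWE}(3) (the excess twist $+1$ forces vanishing), and for singular $W$ by induction --- together with control of the boundary operator out of $\MGL'_{2n+1,n}(U)$, which is where resolution of singularities and the blow-up/dimension-filtration results of \cite{LevineMorel} are used. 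I expect this last step to be the main obstacle: the localization sequence for $\Omega_*$ is only right exact while that for $\MGL'_{2*,*}$ is a full long exact sequence, so the reduction is not a purely formal diagram chase but requires tracking exactness --- and the needed vanishing and surjectivity of $\MGL'$-maps --- by hand; once it is in place, the remaining work is organisational, combining the duality formalism of \S\ref{sec:OrientDual}--\S\ref{sec:AlgCobordOrient}, the coniveau tower \cite{LevineHC}, and the Hopkins--Morel--Hoyois computation of proposition~\ref{prop:MGLLWE}.
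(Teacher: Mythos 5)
Your overall plan --- reduce to residue fields via localization, feed in the Hopkins--Morel--Hoyois computation of proposition~\ref{prop:MGLLWE} at generic points, and induct on dimension --- is the right one and matches the proof in \cite{LevineComparison} that the paper recaps after Theorem~\ref{thm:MGLComp}. But both of the concrete mechanisms you invoke do not quite exist, and the gap sits exactly where you admit to being unsure. There is no coniveau spectral sequence for $\Omega^*$ with $E_1$-term $\oplus_x\Omega^*(\kappa(x))$: algebraic cobordism has only a \emph{right-exact} localization sequence $\Omega_*(Z)\to\Omega_*(X)\to\Omega_*(U)\to 0$, hence no residue or boundary maps, and therefore no coniveau or niveau spectral sequence analogous to the one for $\MGL^{2*,*}$; the very absence of such a boundary is the obstruction. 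What $\Omega_*$ does have is the right-exact sequence $\Omega_*^{(1)}(X)\to\Omega_*(X)\to\oplus_\eta\Omega_*(\eta)\to0$, and that, together with the inductive hypothesis, yields surjectivity of $\vartheta_\MGL(X)$ formally, but says nothing about injectivity.

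The step you flag as ``the main obstacle'' --- the mismatch between the right-exact $\Omega$-sequence and the long exact $\MGL'$-sequence --- is indeed where all the work lies, and it is not resolved by ``tracking exactness by hand'' or by the blow-up results of \cite{LevineMorel} alone. The essential new ingredient, constructed in \cite{LevineComparison} and reproduced in the paper around \eqref{eqn:PresentOmega} and \eqref{eqn:HM2}, is a homomorphism $\Div:\L_{*-d+1}\otimes\oplus_{\eta\in X_{(d)}}k(\eta)^\times\to\Omega_*^{(1)}(X)$, a cobordism substitute for the missing boundary map, together with the verification that $\vartheta^{(1)}\circ\Div=\del\circ\oplus_\eta t_\MGL(\eta)$ and that the resulting augmented sequence \eqref{eqn:PresentOmega} is \emph{exact}, not merely a complex. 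It is this explicit presentation of $\Omega_*(X)$ that allows the five-lemma diagram chase against the $\MGL'$ localization sequence to close and deliver injectivity of $\vartheta_\MGL(X)$. Without $\Div$ and the exactness of \eqref{eqn:PresentOmega}, the induction has no control over $\ker\bigl(i_*:\Omega_*^{(1)}(X)\to\Omega_*(X)\bigr)$, which is exactly what injectivity requires. Your identification of the strategy is correct; the missing idea is the $\Div$ construction and the accompanying exactness statement.
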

This result relies on the Hopkins-Morel spectral sequence, see \cite{HopkinsMorel, Hoyois}.

In the course of the proof of theorem~\ref{thm:MGLComp}, we proved another result which we will be using here. 

Let $X$ be in $\Sch/k$ and let $d=d_X:=\max_{X'}\dim_kX'$, as $X'$ runs over the irreducible components of $X$. We define $\MGL_{2*,*}^{\prime (1)}(X)$ by
\[
\MGL_{2*,*}^{\prime (1)}(X):=\colim_W \MGL_{2*,*}'(W)
\]
as $W$ runs over all (reduced) closed subschemes of $X$ which contain no dimension $d$ generic point of $X$; $\Omega_*^{(1)}(X)$ is defined similarly. The natural transformation $\vartheta_\MGL$ gives rise to the commutative diagram
\begin{equation}\label{eqn:HM1}
\xymatrix{
\Omega_*^{(1)}(X)\ar[d]_{\vartheta^{(1)}}\ar[r]^-{i_*}&\Omega_*(X)\ar[d]_{\vartheta(X)}\ar[r]^-{j^*}&\oplus_{\eta\in X_{(d)}}\Omega_*(k(\eta))\ar[r]\ar[d]^{\vartheta}&0\\
\MGL_{2*,*}^{\prime (1)}(X)\ar[r]_-{i_*}&\MGL_{2*,*}'(X)\ar[r]_-{j^*}&\oplus_{\eta\in X_{(d)}}\MGL_{2*,*}'(k(\eta))\ar[r]&0}
\end{equation}
with exact rows and with all vertical arrows isomorphisms. As $(\MGL', \MGL)$ is an oriented duality theory, the bottom line extends to the long exact sequence
\begin{multline*}
\ldots\to \oplus_{\eta\in X_{(d)}} \MGL_{2*+1,*}'(k(\eta))\xrightarrow{\del} \MGL_{2*,*}^{\prime (1)}(X)\\\xrightarrow{i_*}\MGL_{2*,*}'(X)\xrightarrow{j^*}\oplus_{\eta\in X_{(d)}}\MGL_{2*,*}'(k(\eta))\to0.
\end{multline*}
By proposition~\ref{prop:MGLLWE}(2), the map
\[
t_\MGL(\eta):\L_{*-d+1}\otimes k(\eta)^\times\to  \MGL_{2*+1,*}'(k(\eta))
\]
 an isomorphism for each $\eta\in X_{(d)}$. We have constructed in \cite[\S6]{LevineComparison} a group homomorphism
\[
\Div:\L_{*-d+1}\otimes\oplus_{\eta\in X_{(d)}} \Z[k(\eta)^\times]\to \Omega_*^{\prime(1)}(X)
\]
with $\vartheta^{(1)}\circ\Div=\del\circ\oplus_\eta t_\MGL(\eta)$. Since the maps $\vartheta^{(1)}$ and $\vartheta(X)$ are isomorphisms, the map $\Div$ factors through the surjection
\[
\L_{*-d+1}\otimes\oplus_{\eta\in X_{(d)}} \Z[k(\eta)^\times]\to \L_{*-d+1}\otimes\oplus_{\eta\in X_{(d)}}k(\eta)^\times,
\]
and we have the exact sequence
\begin{equation}\label{eqn:PresentOmega}
\oplus_{\eta\in X_{(d)}} \L_{*-d+1}\otimes k(\eta)^\times \xrightarrow{\Div} 
 \Omega_{*}^{\prime(1)}(X)
 \xrightarrow{i_*}\Omega_*(X)\xrightarrow{j^*}\oplus_{\eta\in X_{(d)}} \Omega_*(k(\eta))\to 0
 \end{equation}
 and the extension of diagram \eqref{eqn:HM1} to the commutative diagram 
 \begin{equation}\label{eqn:HM2}
\xymatrixcolsep{14pt}
\xymatrix{
\oplus_{\eta} \L_{*-d+1}\otimes k(\eta)^\times\ar[r]^-{\Div}\ar@{=}[d]&\Omega_*^{(1)}(X)\ar[d]_{\vartheta^{(1)}}\ar[r]^-{i_*}&\Omega_*(X)\ar[d]_{\vartheta(X)}\ar[r]^-{j^*}&\oplus_{\eta}\Omega_*(k(\eta))\ar[r]\ar[d]^{\vartheta}&0\\
\oplus_{\eta} \L_{*-d+1}\otimes k(\eta)^\times\ar[r]_-{div_\MGL}&\MGL_{2*,*}^{\prime (1)}(X)\ar[r]_-{i_*}&\MGL_{2*,*}'(X)\ar[r]_-{j^*}&\oplus_{\eta}\MGL_{2*,*}'(k(\eta))\ar[r]&0}
\end{equation}
with exact rows and vertical arrows isomorphisms. Here $div_\MGL:=\del\circ\oplus_\eta t_\MGL(\eta)$.

\section{The comparison map}  \label{sec:Comp}  Let $(\sE,c)$ be  a weak commutative ring $T$-spectrum in $\SH(k)$ with  orientation  $c$, and let $(\sE'_{*,*}, \sE^{*,*})$ be the corresponding bi-graded oriented duality theory.  We have the natural transformation
\[
\vartheta_{(\sE,c)}:\Omega_*\to \sE'_{2*,*}
\]
given by corollary~\ref{cor:NatTrans}.  The map $\vartheta_{(\sE,c)}(k)$ makes $R^\sE_*$ an $\Omega_*$-algebra; we let $\Omega_*^\sE$ be the oriented Borel-Moore homology theory $\Omega_*^\sE(X):=R^\sE_*\otimes_{\Omega_*(k)}\Omega_*(X)$.

As the external products make $\sE'_{2*,*}(X)$ an $R^\sE_*$-module and the maps $f_*$, $\tilde{c}_1(L)$ are $R^\sE_*$-module maps, we see that  $\vartheta_{(\sE,c)}$ descends to a natural transformation
\[
\bar\vartheta_{(\sE,c)}:\Omega_*^\sE\to \sE'_{2*,*}.
\]

\begin{lem} \label{lem:LandExact} Suppose that the oriented weak commutative ring $T$-spectrum $(\sE,c)$ is geometrically Landweber exact. Then\\
1. For $X\in \Sch/k$ and $\eta\in X$ a point, the map $\bar\vartheta_\sE:\Omega^\sE_*(\eta)\to \sE'_{2*,*}(\eta)$ is an isomorphism.\\
2. Take $X$ in $\Sch/k$ and let $j_i:\eta_i\to X$, $i=1,\ldots, r$ be all the  generic points of $X$. Then the restriction map $j^*:\sE'_{2*,*}(X)\to \oplus_{i=1}^r\sE'_{2*,*}(\eta_i)$ is surjective.\\
3. For each generic point $\eta$ of $X$, the map $t_\sE(\eta):\sE^{2*,*}(k)\otimes_\Z k(\eta)^\times\to \sE^{2*+1,*+1}(\eta)$ is surjective.
\end{lem}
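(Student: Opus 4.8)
The plan is to derive all three parts from the comparison of $\MGL$ with algebraic cobordism, Theorem~\ref{thm:MGLComp}, using geometric Landweber exactness only to compare $\sE$ with $\MGL$ at generic points. For part~(1), since $k$ is perfect I may assume that $\eta$ is the generic point of an integral $X\in\Sm/k$, say of dimension $d$; then $\Omega_*(\eta)=\colim_U\Omega_*(U)$ and, via the duality isomorphisms $\alpha$, $\sE'_{2*,*}(\eta)=\colim_U\sE^{2(d-*),d-*}(U)$ and $\MGL'_{2*,*}(\eta)=\colim_U\MGL^{2(d-*),d-*}(U)$, the colimits running over dense opens $U\subseteq X$. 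Applying the functoriality of $\sE\mapsto(\sE',\sE)$ to the classifying map $\phi_{\sE,c}\colon\MGL\to\sE$ and using the uniqueness clause of Corollary~\ref{cor:NatTrans} — the composite $\Omega_*\xrightarrow{\vartheta_\MGL}\MGL'_{2*,*}\to\sE'_{2*,*}$ sends fundamental classes to fundamental classes, hence equals $\vartheta_{(\sE,c)}$ — I get, after passing to $R^\sE_*$-linear extensions and evaluating at $\eta$, that $\bar\vartheta_\sE(\eta)$ is the composite
\[
R^\sE_*\otimes_{\Omega_*(k)}\Omega_*(\eta)\xrightarrow{\ 1\otimes\vartheta_\MGL(\eta)\ }R^\sE_*\otimes_{\Omega_*(k)}\MGL'_{2*,*}(\eta)\xrightarrow{\ \phi_{\sE,\eta}\ }\sE'_{2*,*}(\eta).
\]
The first arrow is an isomorphism by Theorem~\ref{thm:MGLComp}; after matching the grading conventions through $\alpha$, the second arrow is the map $\phi_{\sE,\eta}$ of Definition~\ref{def:GeomLandEx} with $\epsilon=0$, hence an isomorphism. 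Therefore $\bar\vartheta_\sE(\eta)$ is an isomorphism.

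For part~(2), the localization theorem for algebraic cobordism \cite{LevineMorel} makes $j^*\colon\Omega_*(X)\to\Omega_*(U)$ surjective for every dense open $U\subseteq X$; passing to the colimit over such $U$ (with $U$ eventually a disjoint union of integral schemes, one for each generic point of $X$) shows $\Omega_*(X)\to\bigoplus_i\Omega_*(k(\eta_i))$ is surjective, and applying the right-exact functor $R^\sE_*\otimes_{\Omega_*(k)}(-)$ gives the surjectivity of $\Omega^\sE_*(X)\to\bigoplus_i\Omega^\sE_*(\eta_i)$. Since $\bar\vartheta_\sE$ is natural for the open-immersion restrictions $\eta_i\to X$, this surjection fits in a commutative square with $j^*\colon\sE'_{2*,*}(X)\to\bigoplus_i\sE'_{2*,*}(\eta_i)$ along the bottom, $\bar\vartheta_\sE(X)$ on the left, and $\bigoplus_i\bar\vartheta_\sE(\eta_i)$ — an isomorphism by part~(1) — on the right. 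Hence $j^*\circ\bar\vartheta_\sE(X)$ is surjective, and so is $j^*$.

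For part~(3), the classifying map $\phi_{\sE,c}\colon\MGL\to\sE$ is a weak monoid morphism, so naturality of the classes $t_{(-)}$ gives $t^1_\sE=\phi_{\sE,c}\circ t^1_\MGL$ on $k(\eta)^\times$; extending $R^\sE_*$-linearly, $t_\sE(\eta)$ becomes the composite of the isomorphism $1\otimes t_\MGL(\eta)\colon\sE^{2*,*}(k)\otimes_\Z k(\eta)^\times\to\sE^{2*,*}(k)\otimes_{\L^*}\MGL^{2*+1,*+1}(\eta)$ (using that $t_\MGL(\eta)$ is an isomorphism, Proposition~\ref{prop:MGLLWE}) with the map $\phi_{\sE,\eta}$ of Definition~\ref{def:GeomLandEx} with $\epsilon=1$, which is surjective. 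Thus $t_\sE(\eta)$ is surjective.

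The only real work is in part~(1): the bookkeeping needed to recognize the evaluation of $\bar\vartheta_\sE$ at $\eta$ as $\phi_{\sE,\eta}$ precomposed with the isomorphism of Theorem~\ref{thm:MGLComp} — that is, to track the grading conventions and the identifications provided by the duality isomorphisms $\alpha$, and to check that the compatibility with fundamental classes in Corollary~\ref{cor:NatTrans} indeed pins down the composite as claimed. Parts~(2) and~(3) are then formal.
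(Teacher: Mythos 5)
Your proposal is correct and matches the paper's proof essentially step for step: factor $\vartheta_\sE$ through $\vartheta_\MGL$ and use Theorem~\ref{thm:MGLComp} plus the $\epsilon=0$ hypothesis for (1), right-exact localization for $\Omega_*$ plus (1) for (2), and factor $t_\sE(\eta)$ through $t_\MGL(\eta)$ plus the $\epsilon=1$ hypothesis for (3). The only cosmetic differences are that you carry the $R^\sE_*\otimes_{\L^*}(-)$ bookkeeping more explicitly in (1), and route (2) through $\Omega^\sE_*$ and $\bar\vartheta$ rather than through $\Omega_*$ and $\vartheta$ as the paper does, which amounts to the same thing.
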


\begin{proof} For (1), we may replace $X$ with the closure of $\eta$ in $X$, so we may assume that $\eta$ is the generic point of $X$.  We note that $\vartheta_\sE=\phi_\sE\circ \vartheta_\MGL$. In addition $\vartheta_\MGL$ is an isomorphism, so $\bar\vartheta_\sE(X)$ is up to this isomorphism the same as the map $\MGL'_{2*,*}(X)\otimes_{\L_*}R^\sE_*\to \sE'_{2*,*}(X)$. (1) then follows from the hypothesis on $\sE$.

For (2), we have the commutative diagram
\[
\xymatrix{
\Omega_*(X)\ar[d]_{\vartheta(X)}\ar[r]^-{j_\Omega^*}& \oplus_{\eta\in X_{(d)}}\Omega_*(\eta)\ar[d]^{\oplus_\eta\vartheta}\\
\sE'_{2*,*}(X)\ar[r]_-{j_\sE^*}&\oplus_{\eta\in X_{(d)}}\sE'_{2*,*}(\eta)}
\]
By (1), $\vartheta(\eta):\Omega_*(\eta)\to \sE'_{2*,*}(\eta)$ is surjective. The map $j^*_\Omega$  is also surjective, using the right exact localization sequence for $\Omega_*$. Thus, the map $j^*_\sE$ is also surjective.

For (3), we can rewrite the map  $t_\sE(\eta)$ as the composition
\begin{multline*}
R_\sE^*\otimes_{\L^*}\MGL^{2*,*}(\eta)\otimes \MGL^{1,1}(\eta)\xrightarrow{\cup}R_\sE^*\otimes_{\L^*}\MGL^{2*+1, *+1}(\eta)
\\\xrightarrow{\id\otimes\phi_\sE}R_\sE^*\otimes_{\L^*}\sE^{2*+1, *+1}(\eta)\cong \sE^{2*+1, *+1}(\eta).
\end{multline*}
As the map  $\MGL^{2*,*}(\eta)\otimes \MGL^{1,1}(\eta)\to \MGL^{2*+1, *+1}(\eta)$ is surjective, and  the map $ \MGL^{2*+1, *+1}(\eta)\to \sE^{2*+1, *+1}(\eta)$ is surjective by hypothesis,  it follows that $t_\sE(\eta)$  is also surjective. 
\end{proof}

\begin{thm} \label{thm:GeomLandExact} Suppose that a oriented weak commutative ring $T$-spectrum $(\sE,c)$  is geometrically Landweber exact. Then the natural transformation $\bar\vartheta_{(\sE,c)}:\Omega_*^\sE\to \sE'_{2*,*}$ is an  isomorphism.
\end{thm}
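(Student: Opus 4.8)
The plan is to prove, by induction on $d:=\dim_kX$, that $\bar\vartheta_{(\sE,c)}(X)\colon\Omega^\sE_*(X)\to\sE'_{2*,*}(X)$ is an isomorphism for every $X\in\Sch/k$; this is the direct analogue of the argument for $\sE=\MGL$ carried out in \cite{LevineComparison} (theorem~\ref{thm:MGLComp}), with $\L_*=\Omega_*(k)$ replaced by $R^\sE_*$ and the explicit $\MGL$-computations of proposition~\ref{prop:MGLLWE} replaced by the consequences of geometric Landweber exactness recorded in lemma~\ref{lem:LandExact}. For $d<0$ (i.e. $X=\emptyset$) there is nothing to prove. Since $\Omega^\sE_*$ and $\sE'_{2*,*}$ are additive for disjoint unions and depend only on the reduced structure, the case $d=0$ reduces to $X=\Spec F$ with $F$ a finite field extension of $k$, where it is precisely lemma~\ref{lem:LandExact}(1).

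For the inductive step, write $\eta$ for the generic points of the $d$-dimensional components of $X$, set $\Omega^{\sE,(1)}_*(X):=\colim_W\Omega^\sE_*(W)$ and $\sE^{\prime(1)}_{2*,*}(X):=\colim_W\sE'_{2*,*}(W)$, the colimits over reduced closed $W\subset X$ containing none of these generic points, and form the commutative diagram
\[
\xymatrix@C=13pt{
\bigoplus_\eta R^\sE_{*-d+1}\otimes k(\eta)^\times \ar[r]^-{\Div_\sE}\ar@{=}[d] & \Omega^{\sE,(1)}_*(X)\ar[r]^-{i_*}\ar[d]_-{\bar\vartheta^{(1)}} & \Omega^\sE_*(X)\ar[r]^-{j^*}\ar[d]_-{\bar\vartheta(X)} & \bigoplus_\eta\Omega^\sE_*(k(\eta))\ar[r]\ar[d]^-{\oplus\bar\vartheta} & 0 \\
\bigoplus_\eta R^\sE_{*-d+1}\otimes k(\eta)^\times \ar[r]^-{div_\sE} & \sE^{\prime(1)}_{2*,*}(X)\ar[r]^-{i_*} & \sE'_{2*,*}(X)\ar[r]^-{j^*} & \bigoplus_\eta\sE'_{2*,*}(k(\eta))\ar[r] & 0.
}
\]
The top row is the localization sequence of the oriented Borel--Moore homology theory $\Omega^\sE_*$ (obtained from that of $\Omega_*$ by tensoring with $R^\sE_*$ and passing to the colimit over $W$, so it is exact at $\Omega^\sE_*(X)$ and at the last term), together with the map $\Div_\sE:=\id_{R^\sE_*}\otimes\Div$ deduced from \eqref{eqn:PresentOmega}, which satisfies $i_*\circ\Div_\sE=0$. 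The bottom row is the localization sequence of the oriented duality theory $(\sE',\sE)$ of \S\ref{sec:OrientDual}, again colimited over $W$: it is exact at $\sE'_{2*,*}(X)$, its last map is surjective by lemma~\ref{lem:LandExact}(2), and since $t_\sE(\eta)$ is surjective (lemma~\ref{lem:LandExact}(3)) the map $div_\sE:=\del\circ\bigoplus_\eta t_\sE(\eta)$ has image equal to $\im(\del)=\ker\bigl(i_*\colon\sE^{\prime(1)}_{2*,*}(X)\to\sE'_{2*,*}(X)\bigr)$, so the bottom row is in fact exact at every spot. Commutativity of the left square, $\bar\vartheta^{(1)}\circ\Div_\sE=div_\sE$, follows by tensoring the identity $\vartheta^{(1)}\circ\Div=\del\circ\oplus_\eta t_\MGL(\eta)$ of \S\ref{sec:AlgCobordOrient} with $R^\sE_*$, using $\vartheta_\sE=\phi_\sE\circ\vartheta_\MGL$, the compatibility of $t_\sE(\eta)$ with $t_\MGL(\eta)$ under $\phi_\sE$ (proof of lemma~\ref{lem:LandExact}(3)), and the fact that $\phi_\sE\colon(\MGL',\MGL)\to(\sE',\sE)$ is a morphism of oriented duality theories and so commutes with the boundary maps $\del$. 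The left vertical map is the identity, $\oplus\bar\vartheta$ is an isomorphism by lemma~\ref{lem:LandExact}(1), and $\bar\vartheta^{(1)}=\colim_W\bar\vartheta(W)$ is an isomorphism by the inductive hypothesis, each $W$ having dimension $<d$.

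It then remains a diagram chase to deduce that $\bar\vartheta(X)$ is an isomorphism. Surjectivity follows from the usual four-lemma argument, using that $\bar\vartheta^{(1)}$ and $\oplus\bar\vartheta$ are surjective and the exactness of the two rows at their two right-hand spots. For injectivity, suppose $\bar\vartheta(X)(x)=0$; then $j^*x=0$ because $\oplus\bar\vartheta$ is injective, so $x=i_*(y)$ for some $y\in\Omega^{\sE,(1)}_*(X)$. Now $i_*\bigl(\bar\vartheta^{(1)}(y)\bigr)=\bar\vartheta(X)(x)=0$, so $\bar\vartheta^{(1)}(y)\in\ker i_*=\im(div_\sE)$; write $\bar\vartheta^{(1)}(y)=div_\sE(z)=\bar\vartheta^{(1)}(\Div_\sE(z))$ using the left square. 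Since $\bar\vartheta^{(1)}$ is injective, $y=\Div_\sE(z)$, hence $x=i_*(y)=i_*(\Div_\sE(z))=0$ because $i_*\circ\Div_\sE=0$. This completes the induction, proving that $\bar\vartheta_{(\sE,c)}(X)$ is an isomorphism for all $X\in\Sch/k$, which is the assertion.

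The main obstacle is the control of the bottom row near its left end --- identifying the kernel of $i_*$ in the duality-theory localization sequence with the image of the concrete ``divisor'' map $div_\sE$, which is exactly where geometric Landweber exactness enters, through the surjectivity assertions of lemma~\ref{lem:LandExact}(2),(3) --- together with the verification that this $div_\sE$ agrees, under the isomorphism $\bar\vartheta^{(1)}$, with the tensored divisor map $\Div_\sE=\id\otimes\Div$; establishing the latter requires tracing the construction of $\Div$ from \cite[\S6]{LevineComparison} and its behaviour under the classifying morphism $\phi_\sE$. One should also note that we do \emph{not} claim (and do not need) exactness of the top row at $\Omega^{\sE,(1)}_*(X)$, since $R^\sE_*$ need not be flat over $\L_*$; only the inclusion $\im\Div_\sE\subseteq\ker i_*$, which is automatic, is used. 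Once these inputs are secured, the induction is purely formal.
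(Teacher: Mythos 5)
Your proposal is correct and follows essentially the same route as the paper: induction on the top dimension of the components of $X$, base case from lemma~\ref{lem:LandExact}(1), and for the inductive step the same five-term diagram comparing the tensored localization sequence of $\Omega_*$ with the duality-theory localization sequence of $(\sE',\sE)$, with the left square's commutativity traced back through $\phi_\sE$, the compatibility $\vartheta_\sE=\phi_\sE\circ\vartheta_\MGL$, and the construction of $\Div$ from \cite{LevineComparison}. Your explicit observation that exactness of the top row at $\Omega^{\sE(1)}_*(X)$ is neither claimed nor needed (only $\im\Div_\sE\subseteq\ker i_*$ is used) is a helpful clarification, since $R^\sE_*$ need not be flat over $\L_*$; the paper handles this the same way, describing the top row as ``a complex'' and extracting exactness only as an output of the chase.
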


\begin{proof}  We write $\bar\vartheta$ for $\bar\vartheta_{(\sE,c)}$.    For $\eta$ a   point of $X$, the map 
$\bar\vartheta(\eta):\Omega_*^\sE(\eta)\to \sE'_{2*,*}(\eta)$ is an isomorphism by lemma~\ref{lem:LandExact}(1).  In particular, if $X$ has dimension zero over $k$, then $\bar\vartheta(X)$ is an isomorphism. 

We proceed by induction on the maximum $d$ of the dimensions of the components of $X$; we may assume that $X$ is reduced.
We use the constructions and notations from theorem~\ref{thm:MGLComp} and the discussion following that theorem.  We let $\sE_{2*,*}^{\prime(1)}(X)$ be the inductive limit
\[
\sE_{2*,*}^{\prime (1)}(X):=\colim_W \sE'_{2*,*}(W)
\]
as $W$ runs over all (reduced) closed subschemes of $X$ which contain no dimension $d$ generic point of $X$. This, together with the map $\Div$ defined following theorem~\ref{thm:MGLComp}, and the exact localization  sequence for $\sE'_{*,*}$ gives us the commutative diagram with exact rows
\begin{equation}\label{eqn:Diag1}
\xymatrixcolsep{10pt}
\xymatrix{
\oplus_{\eta\in X_{(d)}}\L_{*-d+1}\otimes k(\eta)^\times\ar[r]^-{\Div}&\Omega_*^{(1)}(X)\ar[d]_{\vartheta^{(1)}}\ar[r]^-{i_*}&\Omega_*(X)\ar[d]_{\vartheta(X)}\ar[r]^-{j^*}&\oplus_{\eta\in X_{(d)}}\Omega_*(\eta)\ar[r]\ar[d]_{\vartheta}&0\\
\oplus_{\eta\in X_{(d)}}\sE_{2*+1,*}'(k(\eta))\ar[r]_-{\partial} &\sE_{2*,*}^{\prime (1)}(X)\ar[r]_-{i_*}&\sE'_{2*,*}(X)\ar[r]_-{j^*}&\oplus_{\eta\in X_{(d)}}\sE'_{2*,*}(\eta)\ar[r]&0;}
\end{equation}
the surjectivity in the bottom row comes from lemma~\ref{lem:LandExact}(2).

We apply $R^\sE_*\otimes_{\Omega_*(k)}(-)$ to the top row in \eqref{eqn:Diag1}. As noted at the beginning of this section,  the vertical maps in \eqref{eqn:Diag1}  descend to give the commutative diagram
\begin{equation}\label{eqn:Diag2}
\xymatrixcolsep{14pt}
\xymatrix{
\oplus_{\eta\in X_{(d)}}R^\sE_{*-d+1}\otimes k(\eta)^\times\ar[r]^-{\Div_\sE}&\Omega_*^{\sE(1)}(X)\ar[d]_{\bar\vartheta^{(1)}}\ar[r]^-{i_*}&\Omega^\sE_*(X)\ar[d]_{\bar\vartheta(X)}\ar[r]^-{j^*}&\oplus_{\eta\in X_{(d)}}\Omega^\sE_*(\eta)\ar[r]\ar[d]_{\bar\vartheta}&0\\
\oplus_{\eta\in X_{(d)}}\sE_{2*+1,*}'(k(\eta))\ar[r]_-{\partial} &\sE_{2*,*}^{\prime (1)}(X)\ar[r]_-{i_*}&\sE_{2*,*}'(X)\ar[r]_-{j^*}&\oplus_{\eta\in X_{(d)}}\sE_{2*,*}'(\eta)\ar[r]&0;}
\end{equation}

By induction on $d$, the map $\vartheta^{(1)}$ is an isomorphism; we have already seen that $\bar\vartheta$ is an isomorphism. We note that the bottom row is a sequence of $R^\sE_*$-modules via the the $\sE'_{2*,*}(k)$-module structure given by external products. 

The universal property of $\MGL$ gives the canonical morphism of oriented weak commutative ring $T$-spectra
\[
\phi_{\sE}:(\MGL, c_\MGL)\to (\sE, c),
\]
which extends to the map of bi-graded oriented duality theories
\[
\phi_{\sE}:(\MGL'_{*,*}, \MGL^{*,*})\to (\sE'_{*,*}, \sE^{*,*}).
\]

As discussed in remark~\ref{rem:Unit}, we have for each orientable $\sE$ and each $Y\in \Sm/k$  the $R_\sE^*$-module map
\[
t_{\sE}(Y):\sO_Y^\times(Y)\otimes R_\sE^*\to \sE^{2*+1,*+1}(Y),
\]
natural in $\sE$ and $Y$.  This gives us the commutative diagram
\begin{equation}\label{eqn:Diag5}
\xymatrix{
\oplus_{\eta\in X_{(d)}} \L_{*-d+1}\otimes k(\eta)^\times\ar[r]^-{t_\MGL}\ar[d]_\pi&
\oplus_{\eta\in X_{(d)}}\MGL'_{2*+1,*}(k(\eta))\ar[r]^-{\partial}\ar[d]_{\phi_{\sE}}
&\MGL_{2*,*}^{\prime (1)}(X)\ar[d]_{\phi_{\sE}(X)^{(1)}}\\
\oplus_{\eta\in X_{(d)}} R^\sE_{*-d+1}\otimes k(\eta)^\times\ar[r]_-{t_{\sE}}&\oplus_{\eta\in X_{(d)}}\sE_{2*+1,*}'(k(\eta))\ar[r]_-{\partial}&\sE_{2*,*}^{\prime (1)}(X)
}
\end{equation}
where $\pi$ is induced by the classifying map $\L_*\to R^\sE_*$.

Take $\eta\in X_{(d)}$.   By lemma~\ref{lem:LandExact}(3) the map
\[
t_{\sE}(\eta):R^\sE_{*-d+1}\otimes k(\eta)^\times\to \sE_{2*+1,*}'(\eta).
\]
is surjective for each $\eta$. Defining $div_\sE=\partial\circ \sum_\eta t_{\sE}(\eta)$ and putting this into the diagram \eqref{eqn:Diag2}  gives us the commutative diagram
\begin{equation}\label{eqn:Diag3}
\xymatrixcolsep{14pt}
\xymatrix{
\oplus_{\eta\in X_{(d)}}R^\sE_{*-d+1}\otimes k(\eta)^\times\ar[r]^-{\Div_\sE}&\Omega_*^{\sE(1)}(X)\ar[d]_{\bar\vartheta^{(1)}}\ar[r]^-{i_*}&\Omega^\sE_*(X)\ar[d]_{\bar\vartheta(X)}\ar[r]^-{j^*}&\oplus_{\eta\in X_{(d)}}\Omega^\sE_*(\eta)\ar[r]\ar[d]_{\bar\vartheta}&0\\
\oplus_{\eta\in X_{(d)}}R^\sE_{*-d+1}\otimes k(\eta)^\times\ar[r]_-{div_\sE} &\sE_{2*,*}^{\prime (1)}(X)\ar[r]_-{i_*}&\sE_{2*,*}'(X)\ar[r]_-{j^*}&\oplus_{\eta\in X_{(d)}}\sE_{2*,*}'(\eta)\ar[r]&0}
\end{equation}
with the bottom row exact and the top row a complex. Recalling that 
 $div_\MGL=\partial\circ \sum_\eta t_{\MGL}(\eta)$,  the commutativity of diagram \eqref{eqn:Diag5} gives us the commutative diagram
\begin{equation}\label{eqn:Diag4}
\xymatrix{
\oplus_{\eta\in X_{(d)}} \L_{*-d+1}\otimes k(\eta)^\times\ar[r]^-{div_\MGL}\ar[d]_\pi
&\MGL_{2*,*}^{\prime (1)}(X)\ar[d]_{\phi_{\sE}(X)^{(1)}}\\
\oplus_{\eta\in X_{(d)}} R^\sE_{*-d+1}\otimes k(\eta)^\times\ar[r]_-{div_{\sE}}&\sE_{2*,*}^{\prime (1)}(X).
}
\end{equation}

We claim the identity map on $\oplus_\eta R^\sE_{*-d+1}\otimes k(\eta)^\times$ fills in the diagram \eqref{eqn:Diag3} to a commutative diagram. Assuming this claim, it follows by a diagram chase that the top row is exact and the map $\bar\vartheta(X)$ is an isomorphism. 

To prove the claim,  it follows from the characterization of $\vartheta_\MGL$ and $\vartheta_{\sE}$ given in remark~\ref{rem:Char} that $\vartheta_{\sE}=\phi_{\sE}\circ\vartheta_\MGL$. Thus patching diagram \eqref{eqn:Diag4} into the left-hand square in the commutative diagram \eqref{eqn:HM2} yields the commutative diagram
\begin{equation}\label{eqn:HM3}
\xymatrixcolsep{25pt}
\xymatrix{
\oplus_{\eta\in X_{(d)}} \L_{*-d+1}\otimes k(\eta)^\times\ar[r]^-{\Div}\ar@{=}[d]&\Omega_*^{(1)}(X)\ar[d]_{\vartheta_\MGL^{(1)}}\ar@/^40pt/[dd]^{\vartheta_\sE^{(1)}}\\
\oplus_{\eta\in X_{(d)}} \L_{*-d+1}\otimes k(\eta)^\times\ar[r]^-{div_\MGL}\ar[d]_\pi
&\MGL_{2*,*}^{\prime (1)}(X)\ar[d]_{\phi_{\sE}(X)^{(1)}}\\
\oplus_{\eta\in X_{(d)}} R^\sE_{*-d+1}\otimes k(\eta)^\times\ar[r]_-{div_{\sE}}&\sE_{2*,*}^{\prime (1)}(X)
}
\end{equation}
As $\Div_{\sE}:R^\sE_*\otimes k(\eta)^\times\to \Omega_*^{\sE(1)}(X)$ is just the map formed by applying the functor $R^\sE_*\otimes_{\L}(-)$ to $\Div: \L\otimes k(\eta)^\times\to\Omega_*^{(1)}(X)$, the desired commutativity follows from the commutativity of the outer square in \eqref{eqn:HM3}.
\end{proof}

\begin{cor}\label{cor:main} Let $(\sE,c)$ be a Landweber exact oriented weak ring $T$-spectrum in $\SH(k)$, $k$ a field of characteristic zero and let $(f_0\sE, c_0)$ be the effective cover of $(\sE,c)$. Then the canonical natural transformations of oriented  Borel-Moore homology theories on $\Sch_k$
\begin{align*}
&\vartheta_{\sE, c}:R_\sE\otimes_\L\Omega_*\to \sE'_{2*,*}\\
&\vartheta_{f_0\sE, c_0}:R_{f_0\sE}\otimes_\L\Omega_*\to f_0\sE'_{2*,*}\\
\end{align*}
are isomorphisms. Moreover,  the canonical natural transformations of oriented  cohomology theories on $\Sm/k$
\begin{align*}
&\vartheta_{\sE, c}:R_\sE\otimes_\L\Omega^*\to \sE^{2*,*}\\
&\vartheta_{f_0\sE, c_0}:R_{f_0\sE}\otimes_\L\Omega^*\to f_0\sE^{2*,*}\\
\end{align*}
are isomorphisms.
\end{cor}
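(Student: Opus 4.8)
The plan is to obtain the corollary essentially for free from Theorems~\ref{thm:LexactGLExact} and~\ref{thm:GeomLandExact}; all the real content sits in those two results, so what remains is to fit them together and keep track of indexing conventions. The one conceptual point to stress is that although $(F_{\sE,c}, R_{f_0\sE})$ need not be Landweber exact, the effective cover \emph{is} geometrically Landweber exact in the sense of Definition~\ref{def:GeomLandEx}, and that is precisely the hypothesis Theorem~\ref{thm:GeomLandExact} requires.

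First I would treat the Borel--Moore homology statement. Since $(\sE,c)$ is Landweber exact, Theorem~\ref{thm:LexactGLExact}(1) says that both $(\sE,c)$ and its effective cover $(f_0\sE, c_0)$ — the latter equipped with the orientation $c_0$ and weak ring structure constructed in \S\ref{sec:Cover} — are geometrically Landweber exact. Applying Theorem~\ref{thm:GeomLandExact} to each then gives that
\[
\bar\vartheta_{(\sE,c)}\colon\Omega_*^\sE\to\sE'_{2*,*},\qquad
\bar\vartheta_{(f_0\sE,c_0)}\colon\Omega_*^{f_0\sE}\to f_0\sE'_{2*,*}
\]
are isomorphisms of oriented Borel--Moore homology theories on $\Sch/k$. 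By Remark~\ref{rem:CompClassFGL} the classifying map $\phi_\Omega\colon\L_*\to\Omega_*(k)$ is an isomorphism, and by Corollary~\ref{cor:NatTrans} (via the identity $\vartheta_{(\sE,c)}(Y)(c_1^\Omega(L))=c_1^\sE(L)$) the $\Omega_*(k)$-algebra structure on $R^\sE_*$ is the one classifying $F_{\sE,c}$; hence $\Omega_*^\sE(X)=R^\sE_*\otimes_{\Omega_*(k)}\Omega_*(X)$ rewrites as $(R_\sE\otimes_\L\Omega_*)(X)$, and likewise for $f_0\sE$ once $R_{f_0\sE}$ is identified with the non-positively graded part of $R_\sE$ using Theorem~\ref{thm:LexactGLExact}(2). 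Finally, by the characterization of $\vartheta_{(\sE,c)}$ on fundamental classes in Remark~\ref{rem:Char}, the maps $\bar\vartheta$ are exactly the canonical transformations $\vartheta_{\sE,c}$ and $\vartheta_{f_0\sE,c_0}$ of the statement, yielding the first pair of isomorphisms.

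Next I would pass to cohomology on $\Sm/k$. For $Y\in\Sm/k$ of dimension $d$ one may take $M=Y$ in the construction of $\sE'$, so that $\sE'_{2*,*}(Y)=\sE^{2(d-*),\,d-*}(Y)$; combining this with the identity $\Omega^*(Y)=\Omega_{d-*}(Y)$ and re-indexing $*\mapsto d-*$, the isomorphism $\bar\vartheta_{(\sE,c)}(Y)$ transports to an isomorphism $R_\sE\otimes_\L\Omega^*(Y)\iso\sE^{2*,*}(Y)$. By Corollary~\ref{cor:NatTrans} the map so obtained is the canonical ring homomorphism $\vartheta^\sE(Y)$, it is natural with respect to all morphisms in $\Sm/k$, and it sends $c_1^\Omega(L)$ to $c_1^\sE(L)$; hence it is an isomorphism of geometric oriented cohomology theories on $\Sm/k$. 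The same argument applied to $(f_0\sE,c_0)$ and its oriented duality theory $f_0\sE'$ produces the last isomorphism.

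The main point to watch — rather than a genuine obstacle — is the bookkeeping: matching the homological gradings $R^\sE_*$ against the cohomological $R^*_\sE$ and the opposite gradings on $\L$, checking that $(f_0\sE,c_0)$ really is an oriented weak commutative ring $T$-spectrum so that Theorem~\ref{thm:GeomLandExact} applies to it (which is the Proposition of \S\ref{sec:Cover}), and confirming that the transformations named in the corollary are those built in Corollary~\ref{cor:NatTrans}. No input beyond Theorems~\ref{thm:LexactGLExact} and~\ref{thm:GeomLandExact} is needed.
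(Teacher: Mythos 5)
Your proposal is correct and follows essentially the same route as the paper: Theorem~\ref{thm:LexactGLExact}(1) gives geometric Landweber exactness for both $(\sE,c)$ and $(f_0\sE,c_0)$, Theorem~\ref{thm:GeomLandExact} then produces the Borel--Moore isomorphisms, and the cohomological statement follows by the homology/cohomology duality on $\Sm/k$. The only cosmetic difference is that you carry out the $*\mapsto d-*$ re-indexing by hand, whereas the paper simply cites \cite[proposition 5.2.1]{LevineMorel} for the equivalence of oriented Borel--Moore homology theories and oriented cohomology theories on $\Sm/k$.
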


\begin{proof}
By theorem~\ref{thm:LexactGLExact}, both $(\sE,c)$ and $(f_0\sE, c_0)$ are geometrically Landweber exact. We then apply theorem~\ref{thm:GeomLandExact} to yield the desired isomorphisms of oriented Borel-Moore homology theories. 

The statement about the oriented cohomology theories on $\Sm/k$ follows by restriction from $\Sch/k$ to $\Sm/k$, using the equivalence of oriented Borel-Moore homology theories and oriented cohomology theories on $\Sm/k$ \cite[proposition 5.2.1]{LevineMorel}.
\end{proof}


\begin{thebibliography}{99}
\bibitem{Brown}
Brown, K. S., {\sl Abstract homotopy theory and generalized sheaf cohomology}. Trans. Amer. Math. Soc. {\bf 186} (1973), 419--458.

\bibitem{DaiLevine}
Dai S. and Levine, M., {\sl Connective algebraic $K$-theory}  
J. K-Theory {\bf 13} (2014), no. 1, 9--56. 

\bibitem{Motivic}
Dundas, B. I., Levine, M., {\O}stv{\ae}r, P. A., R\"ondigs, O. and Voevodsky, V. {\bf
Motivic homotopy theory}. 
Lectures from the Summer School held in Nordfjordeid, August 2002. Universitext. Springer-Verlag, Berlin, 2007

\bibitem{GoerssJardine}
Goerss, P.\,G.\,and Jardine, J.\,F., {\sl Localization theories for simplicial presheaves}. Canad. J. Math. {\bf 50} (1998), no. 5, 1048--1089.

\bibitem{HopkinsMorel}
M. Hopkins, F. Morel, {\em Slices of MGL}. lecture (Hopkins), Harvard Univ. Dec. 2, 2004. Available as ``Week 8" on 
a webpage of T. Lawson presenting notes from  a seminar given by Mike Hopkins, Harvard, fall of 2004. 
\text{http://www.math.umn.edu/~tlawson/motivic.html}

\bibitem{Hoyois}
M. Hoyois, {\sl From algebraic cobordism to motivic cohomology}.  To appear in J. reine u. ang. Math. 2013, \text{doi:10.1515/crelle-2013-0038, arXiv:1210.7182}

\bibitem{Jardine2}
Jardine, J.F., {\sl Motivic symmetric spectra}, {\em Doc. Math.} 5 (2000), 445--553.

\bibitem{LevineHptyComp}
Levine, M.,  {\sl A comparison of motivic and classical homotopy theories},  J. Topol. {\bf 7} (2014), no. 2, 327--362.

\bibitem{LevineHC}
Levine, M.,  {\sl The homotopy coniveau tower}. J Topology {\bf 1} (2008) 217--267.


\bibitem{LevineOrient}
Levine, M., {\sl Oriented cohomology, Borel-Moore homology and algebraic cobordism}. Michigan Math. J., Special Issue in honor of Melvin Hochster, Volume 57, August 2008, 523-572

\bibitem{LevineComparison}
Levine, M.  {\sl Comparison of cobordism theories}. J. Algebra {\bf 322} (2009), no. 9, 3291--3317.

\bibitem{LevineMorel} Levine, M. and Morel, F. {\bf Algebraic cobordism}. Springer Monographs in Mathematics. Springer, Berlin, 2007.

\bibitem{Mocanasu}
Mocanasu, M. {\sl Borel-Moore functors and algebraic oriented theories}. Preprint (2004). $K$-theory preprint archive 713, \text{http://www.math.uiuc.edu/K-theory/0713/}

\bibitem{MorelLec} 
Morel, F. {\sl  An introduction to $\mathbb A\sp 1$-homotopy theory}.  {\em Contemporary developments in algebraic $K$-theory} 357--441, ICTP Lect. Notes, XV, Abdus Salam Int. Cent. Theoret. Phys., Trieste, 2004.


\bibitem{MorelVoev}  Morel, F. and Voevodsky, V., {\sl $\A^1$-homotopy theory of schemes},
Inst. Hautes \'Etudes Sci. Publ. Math. { 90} (1999), 45--143.

\bibitem{Neeman}
Neeman, A., {\sl On a theorem of Brown and Adams}. Topology {\bf 36} (1997), no. 3, 619--645. 

\bibitem{NeemanTriang}
Neeman, A.,  {\bf Triangulated categories}. Annals of Mathematics Studies, {\bf 148}. Princeton University Press, Princeton, NJ, 2001. 

\bibitem{NaumannSpitzweck}
Naumann, N., Spitzweck, M., {\sl Brown representability in $\A^1$-homotopy theory}. J. K-Theory {\bf 7} (2011), no. 3, 527--539.

\bibitem{NSO}
Naumann, N., Spitzweck, M., {\O}stv{\ae}r, P.\,A., {\sl Motivic Landweber exactness}. Doc. Math. {\bf 14} (2009) 551--593.


\bibitem{Panin2} Panin, I., {\sl Oriented cohomology theories of algebraic varieties. II (After I. Panin and A. Smirnov)}. Homology, Homotopy Appl. {\bf 11} (2009), no. 1, 349--405.

\bibitem{PaninPimenovRoendigs} Panin, I., Pimenov, K. and R\"ondigs, O.
{\sl A universality theorem for Voevodsky's algebraic cobordism spectrum}. Homology, Homotopy Appl. {\bf 10} (2008), no. 2, 211--226.

\bibitem{Pelaez}
Pelaez, J.P., {\sl Multiplicative properties of the slice filtration}. Ast\'erisque {\bf 335} (2011). 

\bibitem{Riou}
Riou, J., {\sl Dualit\'e de Spanier-Whitehead en g\'eom\'etrie alg\'ebrique}.  C. R. Math. Acad. Sci. Paris {\bf 340} (2005), no. 6, 431--436.
 
\bibitem{Spitzweck}
Spitzweck, M., {\sl  Slices of motivic Landweber spectra}. J. K-Theory {\bf 9} (2012), no. 1, 103--117. 

\bibitem{Vezzosi}
Vezzosi, G., {\sl Brown-Peterson spectra in stable $\A^1$-homotopy theory}. Rend. Sem. Mat. Univ. Padova {\bf 106} (2001) 47--64.

\bibitem{VoevOpen}
Voevodsky, V., {\sl Open problems in the motivic stable homotopy theory. I}.  {\em Motives, polylogarithms and Hodge theory, Part I (Irvine, CA, 1998)}, 3--34, Int. Press Lect. Ser., {\bf 3}, I, Int. Press, Somerville, MA, 2002. 

\bibitem{VoevSlice1bis}  
Voevodsky, V., {\em A possible new approach to the motivic spectral sequence for algebraic $K$-theory}.   Recent progress in homotopy theory (Baltimore, MD, 2000) 371--379, Contemp. Math., {\bf 293} Amer. Math. Soc., Providence, RI, 2002. 

\bibitem{Voevodsky}
Voevodsky, V., {\sl   $\A^1$-homotopy theory}, Proceedings of the International 
Congress of Mathematicians, Vol. I (Berlin, 1998). Doc. Math. 1998, 
Extra Vol. I, 579--604.

\end{thebibliography}
\end{document}